\newcommand{\ei}{\varepsilon}
\newcommand{\tb}{\bm \tau}
\newcommand{\om}{\omega}
\newcommand{\Om}{\Omega}
\newcommand{\br}{\bm r}
\newcommand{\vr}{\varrho}
\newcommand{\vri}{\vr_{-}}
\newcommand{\vre}{\vr_{+}}
\newcommand{\bk}{{\bm k}}
\newcommand{\de}{\partial}
\newcommand{\bet}{\hat{\bm k}}
\newcommand{\rf}[1]{(\ref{#1})}
\newcommand{\dst}{\displaystyle}
\newcommand{\nonu}{\nonumber}
\renewcommand{\leq}{\leqslant}
\renewcommand{\geq}{\geqslant}
\newcommand{\s}{\Pi}
\newcommand{\gi}{\gamma_{-}}
\renewcommand{\ge}{\gamma_{+}}
\newcommand{\cB}{\mathcal B}
\renewcommand{\nu}{\om}
\newcommand{\N}{\mathcal N}
\newcommand{\Np}{{\mathcal N}^{+}}
\newcommand{\Nm}{{\mathcal N}^{-}}
\newcommand{\up}{u^{+}}
\newcommand{\oh}{\frac{1}{2}}
\newcommand{\bkp}{{\bk_{+}}}
\newcommand{\kp}{{k_{+}}}
\newcommand{\km}{{k_{-}}}
\newcommand{\ph}{\hat{\psi}}
\newcommand{\f}{\textrm{ph}}
\renewcommand{\setminus}{\smallsetminus}
\newsavebox{\@brx}
\newcommand{\llangle}[1][]{\savebox{\@brx}{\(\m@th{#1\langle}\)}%
  \mathopen{\copy\@brx\mkern2mu\kern-0.9\wd\@brx\usebox{\@brx}}}
\newcommand{\rrangle}[1][]{\savebox{\@brx}{\(\m@th{#1\rangle}\)}%
  \mathclose{\copy\@brx\mkern2mu\kern-0.9\wd\@brx\usebox{\@brx}}}
\newtheorem{theorem}{Theorem}
\newtheorem{lemma}{Lemma}
\theoremstyle{remark}
\newcommand{\I}{\ensuremath{\mathrm{i}}}
\newcommand{\E}{\ensuremath{\mathrm{e}}}
\newcommand{\D}{\ensuremath{\mathrm{d}}}
\begin{document}


\title{Dispersion of waves in two and three-dimensional periodic media\\}
\author{Yuri A. Godin\thanks{Department of Mathematics and Statistics, University of North Carolina at Charlotte,
Charlotte, NC 28223 USA. E-mail: ygodin@uncc.edu}~ and Boris Vainberg\thanks{Department of Mathematics and Statistics,
University of North Carolina at Charlotte, Charlotte, NC 28223 USA. Email: brvainbe@uncc.edu}}

\date{\today\hspace{1ex}}

\maketitle
\begin{abstract}
We consider the propagation of acoustic time-harmonic waves in a homogeneous media containing periodic lattices of spherical or cylindrical inclusions. It is assumed that the wavelength has the order of the periods of the lattice while  the radius $a$ of inclusions is small.  
A new approach is suggested to derive the complete asymptotic expansions of the dispersion relations in two and three-dimensional cases as $a \to 0$ and evaluate explicitly several first terms. Our method is based on the reduction of the original singularly perturbed (by inclusions) problem to the regular one. The Neumann, Dirichlet and transmission boundary conditions are considered. The effective wave speed is obtained as a function of the wave frequency, the filling fraction of the inclusions, and the physical properties of the constituents of the mixture. Dependence of asymptotic formulas obtained in the paper on geometric and material parameters is illustrated by graphs. 
\end{abstract}

\section{Introduction}
\setcounter{equation}{0}

Periodic media offer a great deal of possibilities for manipulating wave propagation. These include electromagnetic waves in photonic crystals, where one can create bands and gaps in the wave spectrum, positive or negative group velocity \cite{Frandsen:06}, slowing down considerably the speed of light \cite{FV:06,MV:04,Krauss:07}, nonreciprocal media \cite{FV:01}, the self-collimation effect \cite{Witzens:02} and more that lead to the development of new devices \cite{Smith:08}.

Similar phenomena can be observed in phononic crystals \cite{Wu:04} for elastic or acoustic waves \cite{Page:04} whose band gap structure is employed in sound filters, transducer design and acoustic mirrors.  Research on breaking time-reversal symmetry in wave phenomena is a growing area of interest in the field of phononic crystals and metamaterials aimed at realizing one-way propagation devices which have many potential technological applications \cite{Nassar:17}.

Deriving an explicit dispersion relation for the Floquet-Bloch waves in two and three-dimensional periodic media is an arduous problem and is usually performed numerically \cite{JJWM:11}. However, assuming that the wavelength is long compared to the period of the lattice or a characteristic size of the scatters one can obtain an asymptotic approximation. Typically it employs the method of matched asymptotic expansions as in \cite{McIver:06, McIver:2009} for small  scatterers with the Dirichlet or Neumann boundary conditions. The latter results were further developed in \cite{Craster:17}. A semi-analytical approach using the multipole expansion method is described in \cite{MMP:02}. A rigorous analysis of a sub-wavelength plasmonic crystal was presented in \cite{Lipton:2010}, where a solution of
a nonlinear eigenvalue problem is given in terms of convergent high-contrast power series for the electromagnetic fields and the first branch
of the dispersion relation. Explicit formulas for the effective dielectric tensor and the dispersion relation are obtained in \cite{GV:19} assuming that the cell size is small compared to the wavelength, but large compared to the size of the inclusions. Some other approached are presented in \cite{Zalipaev:02,Craster:10,Vanel:17,Cherednichenko:06,Smyshlyaev:08,Joyce:17}.

We consider the propagation of waves, governed by the Helmholtz equation in a medium containing periodic lattices of spherical or cylindrical inclusions of radius $a$. We assume transmission boundary conditions on the inclusions' interface and that $a$ is small relative to the period of the lattice while the wave length is comparable to (or larger than) the lattice size. The Dirichlet and Neumann boundary conditions are also discussed. We suggest a new method for determining the dispersion relations of the Floquet-Bloch waves. The method reduces the original singularly perturbed problem to the regular one and provides explicit formulas for the dispersion relations  
in  two and three dimensional settings with rigorous estimates of the remainders. Our approach can be extended if small inclusions have arbitrary shape. The coefficients of asymptotic expansion in this case will be expressed through solutions of a regular boundary value problem. This will be published elsewhere.

\section{Formulation of the problem}
\setcounter{equation}{0}

We consider the propagation of acoustic waves through an infinite medium containing a periodic array $\cB$
of spherical obstacles. The periods of the lattice $\tb_1$, $\tb_2$ and $\tb_3$ are
normalized in such a way that $\ell = \min \{|\tb_1|, |\tb_2|, |\tb_3|\}=1$, while the radius
of the balls $r=a \ll 1$  (see Figure 1).

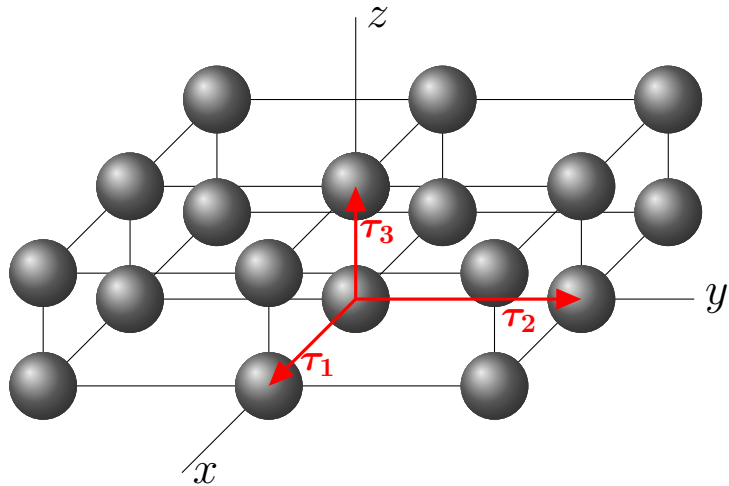
\begin{figure}[th]
\begin{center}
\begin{tikzpicture}[scale=1.5,>=triangle 45]

\begin{scope}[>=latex]
\draw (0,0) -- (3,0,0) node [right] {\Large  $y$};
\draw (0,0) -- (0,2.5,0) node [right] {\Large  $z$};
\draw (0,0) -- (0,0,4) node [right] {\Large  $x$};
\end{scope}

\draw[thin] (0,0,0) -- (-2,0,0);
\draw[thin] (0,0,0) -- (0,0,-2);
\draw[thin] (2,0,2) -- (-2,0,2) -- (-2,0,-2) -- (2,0,-2) -- (2,0,2);
\draw[thin] (0,1,0) -- (-2,1,0);
\draw[thin] (0,1,0) -- (0,1,-2);
\draw[thin] (0,1,0) -- (2,1,0);
\draw[thin] (0,1,0) -- (0,1,2);
\draw[thin] (2,1,2) -- (-2,1,2) -- (-2,1,-2) -- (2,1,-2) -- (2,1,2);

\foreach \x in {-2,0, 2}
\foreach \y in {-2,0,2}
\draw[thin] (\y,0,\x) -- (\y,1,\x);

\shade[ball color=gray] (0,0,0) circle (3mm);
\shade[ball color=gray] (0,0,2) circle (3mm);
\shade[ball color=gray] (2,0,0) circle (3mm);
\shade[ball color=gray] (0,1,0) circle (3mm);
\shade[ball color=gray] (-2,0,0) circle (3mm);
\shade[ball color=gray] (-2,0,2) circle (3mm);
\shade[ball color=gray] (2,0,2) circle (3mm);
\shade[ball color=gray] (-2,0,-2) circle (3mm);
\shade[ball color=gray] (0,0,-2) circle (3mm);
\shade[ball color=gray] (2,0,-2) circle (3mm);
\shade[ball color=gray] (0,1,2) circle (3mm);
\shade[ball color=gray] (2,1,0) circle (3mm);
\shade[ball color=gray] (0,1,0) circle (3mm);
\shade[ball color=gray] (-2,1,0) circle (3mm);
\shade[ball color=gray] (-2,1,2) circle (3mm);
\shade[ball color=gray] (2,1,2) circle (3mm);
\shade[ball color=gray] (-2,1,-2) circle (3mm);
\shade[ball color=gray] (0,1,-2) circle (3mm);
\shade[ball color=gray] (2,1,-2) circle (3mm);

\coordinate [label=left:\textcolor{red}{\large \boldmath{$\tau_1$}}] (A) at (0.5,0,1.5);
\coordinate [label=left:\textcolor{red}{\large \boldmath{$\tau_2$}}] (B) at (1.9,0,0.5);
\coordinate [label=left:\textcolor{red}{\large \boldmath{$\tau_3$}}] (C) at (0.45,0.6,-0.0);

\draw [->,very thick,color=red] (0,0) -- (0,0,2);
\draw [->,very thick,color=red] (0,0) -- (2,0,0);
\draw [->,very thick,color=red] (0,0) -- (0,1,0);

\end{tikzpicture}
\end{center}
\caption{A fragment of periodic lattice of spherical inclusions of radius $a$ generated by vectors  \boldmath{$\tau_1$},
 \boldmath{$\tau_2$}, and \boldmath{$\tau_3$}. }
\label{lattice}
\end{figure}

Assuming the excess pressure $p(\br, t)$ to be time-harmonic $\dst p(\br, t) = u(\br) \E^{-\I \omega t},$
the amplitude $u(\br)$ should satisfy the equation
\begin{equation}
 \nabla \cdot \frac{1}{\vr(\br)}\, \nabla u + \gamma(\br)\, \omega^2  u = 0,
 \quad \br \notin \de \cB,
 \label{eq1}
\end{equation}
where $\vr(\br)$ is the mass density and $\gamma(\br)$ is the adiabatic bulk compressibility modulus
of the media. Both $\vr(\br)$ and $\gamma(\br)$ are periodic piecewise constant functions with the periods of the lattice. In what follows, we denote functions in the
inclusions and the external medium using the subscripts $-$ and $+$, respectively. Thus,
\begin{equation}
 \Delta u + k_{\pm}^2  u = 0,
 \quad \br \notin \de \cB,
 \label{eq}
\end{equation}
where
\begin{align}
\label{km}
  \km = & \,\sqrt{\vri \gi}\, \om, \quad \br \in \cB, \\[2mm]
  \kp = & \,\sqrt{\vre \ge}\, \om, \quad \br \notin \cB.
  \label{kp}
\end{align}

We suppose that inclusions are penetrable and therefore impose the transmission conditions
on their boundaries
\begin{align}
\label{bc1}
\left. \left\llbracket u (\br) \right\rrbracket \right . &=0, \\[2mm]
\left. \left\llbracket \frac{1}{\vr(\br)} \frac{\de u (\br)}{\de n} \right\rrbracket \right. &=0,
\label{bc2}
\end{align}
where the brackets $\llbracket \cdot \rrbracket$ denote the jump  of the enclosed quantity across
the interface $\de \cB$ of the inclusions.
Solution of \rf{eq} is sought in the form of Floquet-Bloch waves
\begin{equation}
 u(\br) = \Phi (\br) \,\E^{-\I \bk \cdot \br},
\end{equation}
where $\bk = (k_1,k_2,k_3)$ is the wave vector and $\Phi (\br)$ is a periodic function
with the periods of the lattice. The latter condition implies that  function
$\E^{-\I\bk \cdot \br} u(\br)$ is periodic. We write this as
\begin{equation}
 \rrbracket  \E^{\I\bk \cdot \br} u(\br) \llbracket =0,
\label{FB1}
\end{equation}
where the inverted brackets $\rrbracket  \cdot \llbracket$ denote the jump of the enclosed expression and their first derivatives
across the opposite sides of the cells of periodicity.

We reduce the above problem to the fundamental cell $\s$ centered at the origin:
\begin{equation}
\begin{array}{l}
\Delta u + k^2_{-}  u = 0, \quad r < a, \\[2mm]
\Delta u + k^2_{+}  u = 0, \quad \br \in \s \cap \{r>a\},
\end{array}
\label{Hz1}
\end{equation}
\begin{align}
\label{bc1a}
\left\llbracket u (\br) \right\rrbracket =0, \quad
\left\llbracket \frac{1}{\vr(\br)} \frac{\de u (\br)}{\de n} \right\rrbracket =0, \quad
\rrbracket  \E^{\I\bk \cdot \br} u(\br) \llbracket =0.
\end{align}

In the inclusionless case there is a simple dispersion relation between the time frequency $\om$ and the spacial frequency $\bk$. Namely, there are waves propagating in any direction and $\om=c|\bk|$, where $c=1/\sqrt{\ge\vre}$ is the speed of waves in the host medium. The dispersion relation $\om=H(\bk,a)$ in the presence of inclusions is more complicated and our goal is to find it when $a$ is small. We will write the dispersion relation in the form $|\bk|^2=G(\bet, \om,a),$ where $\bet=\bk/|\bk|,$

Interaction of the incident wave with inclusions creates multiple Floquet-Bloch waves with the same direction. We consider the waves for which $ |\bk|$ is close to $ \om/c$ when $a$ is small. For other waves, $ |\bk+{\bm L}|$ is close to $ \om/c$, where ${\bm L}$ is an arbitrary period of the dual lattice. These waves can be studied similarly.  

We consider both three and two dimensional cases ($d=3,2$). If $d=2$, the inclusions have the shape of infinite cylinders shown in Figure \ref{cylinders}.

\begin{figure}[th]
\begin{center}
\begin{tikzpicture}[scale=0.20]
\begin{scope}[>=latex,scale=3]
\draw (0,0,4.5) -- (5.5,0,4.5) node [right] {\Large  $y$};
\draw (0,0,0) -- (0,3,0) node [right] {\Large  $z$};
\draw (2,0,0) -- (2,1,14) node [below] {\Large  $x$};
\end{scope}
  \foreach \x in {-2,...,2}{
  \foreach \y in {-2,...,2}{
\begin{scope}[shift={(\x*45mm + \y*30mm,\y*30mm)}]
  \draw [fill=gray, fill opacity=.25]
  (180:5mm) coordinate (a)
  -- ++(0,-60mm) coordinate (b)
  arc (180:360:5mm and 1.75mm) coordinate (d)
  -- (a -| d) coordinate (c) arc (0:180:5mm and 1.75mm);
  \draw [fill=gray, fill opacity=.25]
  (0,0) coordinate (t) circle (5mm and 1.75mm);
\end{scope}
}}

\end{tikzpicture}
\end{center}
\caption{A fragment of periodic lattice of infinite cylinders of radius $a$.}
\label{cylinders}
\end{figure}
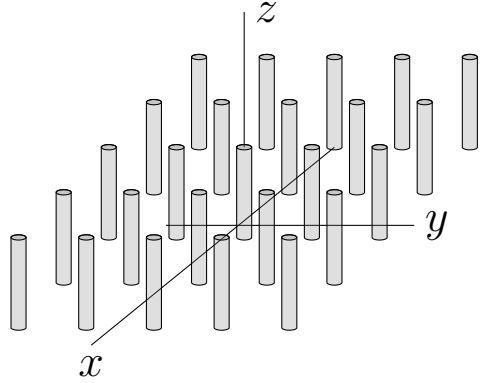

\begin{theorem}\label{t1}
 Let $d=3$. Then function $G$ in the dispersion equation 
 \begin{align}\label{dis1}
 |\bk|^2 =G(\bet,\om,a), \quad \om\neq 0,
\end{align}
is infinitely smooth. Moreover,
\begin{align}\label{dis2}
 |\bk|^2 &= \frac{\om^2}{c^2} \left(1 + c_1 a^3 + c_2 a^5 \right) +g(\hat{\bk},\om, a), \quad |g|<C(\om)a^6, \quad a\to 0,
\end{align}
where \rf{dis2}  has the following form in the case of the Neumann condition on inclusion boundaries 
\begin{equation}\label{n3}
  |\bk|^2 = \frac{\om^2}{c^2}\left(1+ \oh f+\frac{3}{20}
  \left(\frac{\om a}{c} \right)^2 f  \right) + O\left(a^6 \right), \quad f=\frac{4\pi a^3}{3|\Pi|}.
 \end{equation}
 Here $c=1/\sqrt{\gamma_+\vre}$  is the speed of waves in the host medium, $|\Pi|$ is the volume of the cell $\Pi$ and $f$ is the filling fraction of the inclusions.
 
 In the case of transmission conditions, we have
\begin{align}
 \label{c1}
 c_1 a^3 &= \left(\frac{\vri - 4\vre}{\vre + 2\vri}+ \frac{\gi}{\ge} \right)f, \\[2mm]
 \label{c2}
 c_2 a^5 &= \frac{1}{15}\left(\frac{\om a}{c} \right)^2 f
 \left[\left(1- \frac{\gi}{\ge} \right)\left(9- \frac{5\gi}{\ge} \right)-\frac{\gi}{\ge}\left(1- \frac{\gi}{\ge} \frac{\vri}{\vre}\right) \right] \nonu \\[2mm]
 &+\frac{9}{5}\left(\frac{\om a}{c} \right)^2
 \frac{f}{(\vre + 2\vri)^2} \left[ \vre^2 - \vri^2 -\vri\vre \left(1- \frac{\gi}{\ge} \frac{\vri}{\vre}\right) \right].
\end{align}
\end{theorem}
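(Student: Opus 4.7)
The strategy is to exploit the smallness of $a$ by decomposing the Floquet-Bloch solution in the fundamental cell $\s$ into a smooth regular part solving the free Helmholtz equation on all of $\s$, plus a singular multipole correction concentrated at the inclusion. Matching the transmission conditions on the sphere links the two parts, periodic consistency produces a closed linear system for the multipole amplitudes, and nontrivial solvability yields \rf{dis1} as a small perturbation of $|\bk|^2 = \om^2/c^2$ as $a\to 0$. This is exactly the reduction to a regular problem promised in the introduction: the regular part lives on the whole cell without any inclusion, while the inclusion enters only through a finite-dimensional scattering data.

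Concretely, I expand the interior solution as $u = \sum_{n,m} A_n^m\,j_n(\km r)\,Y_n^m$ for $r<a$, and in $\s\cap\{r>a\}$ I write $u = u_{reg}+u_{sing}$ with $u_{reg}=\sum_{n,m} B_n^m\,j_n(\kp r)\,Y_n^m$ satisfying $(\Delta+\kp^2)u_{reg}=0$ throughout $\s$ together with the Floquet-Bloch jump condition on $\de\s$, and $u_{sing}=\sum_{n,m} C_n^m\,y_n(\kp r)\,Y_n^m$ carrying the singular behaviour. The transmission conditions \rf{bc1a} imposed at $r=a$ give, for each $(n,m)$, a $2\times 2$ linear relation among $A_n^m, B_n^m, C_n^m$, yielding a scattering map $C_n^m = T_n(\kp,\km,\vre,\vri,a)\,B_n^m$ whose small-argument Bessel asymptotics render $T_n$ of rapidly increasing order in $a$ as $n$ grows. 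Periodic consistency---the periodization of $u_{sing}$ over the lattice with Floquet factors must have regular part near the origin equal to $u_{reg}$---gives a relation $B_{n'}^{m'}=\sum_{n,m}\Lambda_{n',m';n,m}(\bk,\om)\,T_n\,B_n^m$, where $\Lambda$ collects the regular parts at the origin of the lattice-summed outgoing multipoles. The dispersion relation is the solvability condition $\det(I-\Lambda T)=0$, and because the $T_n$ decay rapidly in $n$, only $n=0,1$ contribute to the accuracy $O(a^6)$ of \rf{dis2}.

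To extract the explicit formulas \rf{n3}--\rf{c2} I Taylor-expand $j_n(k_\pm a)$ and $y_n(\kp a)$ to obtain $T_0, T_1$, compute the low-order entries of $\Lambda$ on the unperturbed mass shell $|\bk|=\om/c$, and solve the resulting implicit scalar equation for $|\bk|^2$. For the Neumann boundary condition the expansion of $T_0$ to fifth order produces both the $\oh f$ term and its $(3/20)(\om a/c)^2 f$ refinement; for the transmission case the monopole factor $\gi/\ge$ in \rf{c1} comes from $T_0$ while the Clausius-Mossotti-type dipole factor $(\vri-4\vre)/(\vre+2\vri)$ comes from $T_1$, with the next-order contributions of $T_0, T_1$ and of $\Lambda$ assembling into \rf{c2}. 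Smoothness of $G$ and the remainder estimate follow by applying the implicit function theorem to $\det(I-\Lambda T)=0$ once $\Lambda$ is shown to depend analytically on $(\bk,\om,a)$ in the relevant regime.

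The main obstacle will be the construction of $\Lambda(\bk,\om)$ as a regular analytic function of $(\bk,\om)$ near the unperturbed dispersion surface: the periodic Green function of $\Delta+\kp^2$ has a pole precisely on $|\bk|=\om/c$, and one must show that its singular part at the origin cancels exactly against the explicit singularity carried by $u_{sing}$, leaving a regular $\Lambda$. Once this regularisation is achieved, uniform control in $a$ of the tail of multipoles $n\geq 2$ and of the inverse $(I-\Lambda T)^{-1}$ is needed to justify the $O(a^6)$ remainder bound, and verifying these uniform bounds while extracting the explicit second-order-in-$\om a/c$ coefficients in \rf{c2} will be the most laborious part of the argument.
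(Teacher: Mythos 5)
Your route is the classical Rayleigh/multipole lattice-sum method, which is genuinely different from the paper's argument: the paper never periodizes multipoles, but instead introduces an auxiliary sphere $r=R$ of \emph{fixed} radius, defines Dirichlet-to-Neumann maps $\Np_{\bk}$ (for $\Pi\setminus B_R$ with the Floquet condition) and $\Nm_a$ (for $B_R$ containing the inclusion), and reads the dispersion relation off a regular perturbation of the operator equation $(\Np_{\bk}-\Nm_a)\psi=0$ about the simple zero eigenvalue at $\bk=\bkp$, $a=0$. Your multipole scheme can in principle be made to work --- your $T_0,T_1$ are in substance the same Bessel-quotient data the paper extracts from $\Nm_a-\Nm_0$ --- but as written it contains a genuine error at exactly the point you single out as the main obstacle.

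The quasi-periodic Green's function $G_{\bk}(\br)=|\Pi|^{-1}\sum_{\bm L}e^{\I(\bk+\bm L)\cdot\br}\big(\kp^2-|\bk+\bm L|^2\big)^{-1}$ has two unrelated singularities: a spatial one at $\br=0$, which is indeed removed when you form the ``regular part at the origin'' and which is what $u_{sing}$ carries; and a parametric pole in $(\bk,\om)$ on the surface $|\bk+\bm L|=\kp$, coming from the vanishing denominator of the resonant Fourier mode. The second survives in $\Lambda$ no matter how you subtract the first: $\Lambda$ blows up like $(\kp^2-|\bk|^2)^{-1}$ as $|\bk|\to\kp$, and this blow-up is the entire mechanism of the method. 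Since $T_n=O(a^3)$, the condition $\det(I-\Lambda T)=0$ can be met only where $\Lambda$ has size $a^{-3}$, i.e.\ where $\kp^2-|\bk|^2=O(a^3)$; if you actually produced an analytic $\Lambda$ near $|\bk|=\om/c$, you would get $\det(I-\Lambda T)=1+O(a^3)\neq0$ and find no dispersion branch at all. The repair is to isolate, not cancel, the resonance: write $\Lambda=P(\hat{\bk},\om)\,(\kp^2-|\bk|^2)^{-1}+\Lambda_{reg}$ with a rank-one residue $P$, multiply the solvability condition through by $\kp^2-|\bk|^2$, and apply the implicit function theorem to the resulting scalar equation for $\kp^2-|\bk|^2$; the residue $P$ then plays the role of the paper's constant $C=2\kp|\Pi|$ from Lemma \ref{l33}. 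Even after that, you still need uniform control in $a$ of the $n\geq2$ multipole tail and of the lattice sums in $\Lambda_{reg}$ --- estimates the paper's fixed-radius construction avoids entirely, since its operators are manifestly Fredholm and smooth in $\bk$ and $a$.
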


\begin{theorem}\label{t2}
 Let $d=2$. Then the dispersion relation has the form 
\[
 |\bk|^2 =\sum_{i=0}^\infty \sum_{j=0}^\infty \sigma_{ij}(\hat{\bk},\om) a^{2i} \left(a^2 \ln a\right)^{j}, \quad a\to 0.
\]
Moreover,
\begin{align}
\label{disp2d}
 |\bk|^2 = \frac{\om^2}{c^2} \left( 1 +c_1a^2-c_2a^4\ln \frac{\om a}{c}\right) + g(\hat{\bk},\om, a), \quad |g|<C(\om)a^4,
\end{align}
where \rf{disp2d}  has the following form in the case of the Neumann condition on inclusion boundaries 
\begin{equation}\label{n2}
  |\bk|^2 = \frac{\om^2}{c^2}\left(1+ f-\frac{3}{2}
  \left(\frac{\om a}{c} \right)^2 f \ln \frac{\om a}{c} \right) + O\left(f^2 \right), \quad f=\frac{\pi a^2}{|\Pi|}.
 \end{equation}
 In the case of transmission conditions, we have
\begin{align}\label{t2a}
 c_1a^2 = \left(2\alpha-1+ \frac{\gi}{\ge}\right)f , \quad ~~ c_2a^4=\left[\oh \left(1- \frac{\gi}{\ge}\right)^2 + \alpha^2 \right] \left( \frac{\om a}{c}\right)^2 f,
\end{align}
where  $\dst \alpha = \frac{\vri -\vre}{\vre + \vri}$.
\end{theorem}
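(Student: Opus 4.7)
My plan is to carry over, with adjustments for the logarithms that are endemic to 2D, the regularization strategy announced in the introduction and already used for Theorem~\ref{t1}. Working in the fundamental cell $\s$ with \rf{Hz1}--\rf{bc1a}, I would write the Floquet--Bloch solution as $u = u_s + u_r$, where $u_s$ is an explicit field capturing the behavior of $u$ near the disc $r=a$, and $u_r$ satisfies a problem whose coefficients are smooth in the small parameter $a^2$. The dispersion relation $|\bk|^2 = G(\hat{\bk},\om,a)$ will then emerge as the Fredholm solvability condition that closes the iterative construction of $u_r$.

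\textbf{Inner (near-disc) model.} For $u_s$ I would solve the \emph{Laplace} transmission problem on the unit disc, fed with the Taylor expansion of the background Bloch wave $e^{-\I\bk\cdot\br}$ at the origin. In 2D, the constant piece passes through unchanged at the Laplace level (a monopole is excited only through the Helmholtz term $k_\pm^2 u$, via the jump $k_+^2-k_-^2=\om^2(\ge\vre-\gi\vri)$), whereas the linear piece produces an exterior dipole of strength $\alpha a^2$ with $\alpha=(\vri-\vre)/(\vre+\vri)$. These two polarizabilities are precisely the quantities appearing in $c_1 a^2 = (2\alpha - 1 + \gi/\ge) f$ of \rf{t2a}: the $2\alpha$ piece comes from the dipole and the $\gi/\ge$ piece from the compressibility-driven monopole.

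\textbf{Outer matching and the logarithm.} Away from the disc I would represent the scattered field via the quasi-periodic Green's function $G_\om(\br;\bk)$ of $\Delta + k_+^2$ on $\s$ with quasi-periodicity \rf{FB1}. Its expansion near the origin has the form
\[
G_\om(\br;\bk) = \tfrac{1}{2\pi}\ln|\br| + R_0(\bk,\om) + R_1(\bk,\om)\cdot\br + O(|\br|^2 \ln|\br|),
\]
where $R_0$ contains a piece $\tfrac{1}{2\pi}\ln(\om/c)+\const$. Matching the far field of $u_s$ to the smooth correction $u_r$ in the intermediate range $a \ll |\br| \ll 1$ fixes the dipole and monopole amplitudes in terms of $R_1$ and $R_0 + \tfrac{1}{2\pi}\ln a$; enforcing global quasi-periodicity then yields a small linear compatibility system whose determinant is the dispersion equation. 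Expanding this determinant about $|\bk|^2 = \om^2/c^2$, the $O(a^2)$ contribution reassembles into $c_1 a^2$, and the cross term between the $\ln a$ inside $R_0 + \tfrac{1}{2\pi}\ln a$ and the second-order (monopole-squared and dipole-squared) contributions produces $c_2 a^4 \ln(\om a/c)$ with the combination $\tfrac{1}{2}(1-\gi/\ge)^2 + \alpha^2$. The double series $\sum \sigma_{ij} a^{2i} (a^2\ln a)^j$ then follows by induction: the regularized problem is real-analytic in $a^2$, and each new factor of $a^2 \ln a$ comes from one additional use of the $\ln|\br|$ part of $G_\om$.

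\textbf{Main obstacle.} The hardest step is the clean bookkeeping of logarithms: splitting each occurrence into the ``kept'' $\ln a$, the ``absorbed'' $\ln(\om/c)$, and an analytic remainder, while verifying that the Fredholm condition at every order really closes the expansion uniformly in $\hat{\bk}$. In particular, obtaining $c_2$ requires pushing the inner solution one order past the Laplace limit to pick up the $k_\pm^2 a^2$ corrections, since those are exactly what couple to the logarithm in $R_0 + \tfrac{1}{2\pi}\ln a$ and generate the $(1-\gi/\ge)^2$ contribution.
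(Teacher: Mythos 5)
Your route is genuinely different from the paper's. The paper does not use matched asymptotic expansions or the quasi-periodic Green's function at all: it splits the cell $\Pi$ into a fixed ball $B_R$ (containing the inclusion) and its complement $\Omega$, introduces the two Dirichlet-to-Neumann operators $\Np_{\bk}$ and $\Nm_a$ on the fixed circle $\de B_R$, and proves that the Floquet--Bloch problem is equivalent to the operator equation $(\Np_{\bk}-\Nm_a)\psi=0$. The whole point of that reduction is that $\Nm_a-\Nm_0$ is computed \emph{exactly} by separation of variables in Bessel functions on $B_R$, so the singular dependence on $a$ (including the $a^{2i}(a^2\ln a)^j$ structure, which drops out of the explicit expansions of the coefficients $D_n,F_n$) is isolated in a single, explicitly controllable operator, while the $\bk$-dependence sits in the regular perturbation $\Np_{\bk}-\Np_{\bkp}$; the dispersion relation is then the scalar equation $C\ei+B_{11}(a)+O((|\ei|+a^2)^2)=0$ obtained by standard perturbation theory about the simple zero eigenvalue, with $C=2\kp|\Pi|$ computed in Lemma~\ref{l33}. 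Your inner/outer matching with the lattice Green's function is the classical alternative (the McIver--Craster line cited in the introduction), and your physical attribution of the coefficients is correct: the dipole polarizability $\alpha=(\vri-\vre)/(\vre+\vri)$ does give the $2\alpha$ term, the compressibility-driven monopole gives $\gi/\ge-1$, and the $a^4\ln a$ coefficient $\tfrac12(1-\gi/\ge)^2+\alpha^2$ is indeed the monopole-squared plus dipole-squared combination (matching the paper's $(k_-^2\vre-k_+^2\vri)^2/(4\vri^2)$ and $k_+^4\alpha^2/4$ terms in Lemma~\ref{l5}). What the paper's approach buys is rigor essentially for free -- Fredholmness, symmetry, and smoothness of the reduced operator give the remainder bound $|g|<C(\om)a^4$ and the full double series without any matching argument to justify.

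Two points in your outline need real work before this becomes a proof. First, the quasi-periodic Green's function $G_\om(\br;\bk)$ of $\Delta+k_+^2$ fails to exist exactly on the unperturbed dispersion surface $|\bk|=\kp$, which is precisely where you want to expand; the regular part $R_0(\bk,\om)$ has a simple pole there with residue proportional to $1/(2\kp|\Pi|)$, and it is the balance of this pole against the $O(a^2)$ polarizabilities that actually produces the dispersion relation. Your sketch treats $R_0$ as a benign constant containing $\tfrac{1}{2\pi}\ln(\om/c)$, which omits the ingredient playing the role of the paper's constant $C=2\kp|\Pi|$; without it the ``determinant'' does not close. Second, you correctly flag that the uniform remainder estimate and the inductive generation of the $(a^2\ln a)^j$ series are the hard part, but you leave them as an obstacle rather than resolving them -- and that is exactly the gap the paper's DtN reduction is designed to eliminate, since there the $\ln a$ bookkeeping is read off from the explicit small-argument expansions of $J_n, Y_n$ in \rf{DnFn} rather than from an iterated matching scheme.
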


Note that the Neumann condition on inclusion boundaries can be obtained from the transmission conditions by passing to the limit $\lim\vri\to \infty, \lim\gi\vri\to 0$. While formulas \rf{n3}, \rf{n2} coincide with \rf{c1}, \rf{t2a}, respectively, when $\vri^{-1}=\gi=0$, we do not justify these limiting transitions in the formulas. Instead, one can obtain the results for Neumann boundary conditions independently using the same approach.

Our approach can be also applied to the problem with the Dirichlet boundary condition. The answer in this case differs considerably from the transmission problem. Namely,
\begin{itemize}
\item 
 $d=3$, Dirichlet boundary condition
 \begin{equation}\label{d3}
  |\bk|^2 = \frac{\om^2}{c^2} -\frac{4\pi a}{|\Pi|}
  + O\left(a^2\right).
 \end{equation}
 \item 
 $d=2$, Dirichlet boundary condition
 \begin{equation}\label{d2}
  |\bk|^2 = \frac{\om^2}{c^2} -\frac{2\pi}{\dst |\Pi|\ln \frac{\om a}{c}}
  + O\left(\ln^{-2} \left(\frac{\om a}{c} \right)  \right).
 \end{equation}
\end{itemize}

\section{Outline of the proof}
\setcounter{equation}{0}

We begin with the observation that the unperturbed eigenvalue problem
\begin{equation}
 \left\{
 \begin{array}{rl}
  (\Delta+k^2_{+})u(\br) &= \lambda u(\br), \\[2mm]
  \left\rrbracket  \E^{\I\bk \cdot \br} u(\br) \right\llbracket &=0
 \end{array}
 \right.
 \label{m-egv}
\end{equation}
in the cell of periodicity $\Pi$ and a fixed $\bk$ with $|\bk|=\kp$ has a simple eigenvalue $\lambda = 0$ with the eigenfunction $u=\E^{-\I\bk \cdot \br}$. This statement is independent of whether we solve \rf{m-egv} in the Sobolev space $H^2(\Pi)$ or in the space of infinitely smooth functions, since the ellipticity of the problem implies that solutions of \rf{m-egv} are infinitely smooth. The simplicity of a zero eigenvalue can be easily established using the substitution $\E^{-\I\bk \cdot \br}u=v$ and expanding the periodic function $v$ into Fourier series.

Problem
\begin{equation}
 \left\{
 \begin{array}{rl}
  (\Delta+k^2_{\pm})u(\br) &= \lambda u(\br), \quad \br\in \Pi, \quad r\gtrless a, \\[2mm]
 \left\llbracket u (\br) \right\rrbracket \ &= \dst
\left\llbracket \frac{1}{\vr(\br)} \frac{\de u (\br)}{\de n} \right\rrbracket=0, \quad  \left\rrbracket  \E^{\I\bk \cdot \br} u(\br) \right\llbracket =0
 \end{array}
 \right.
 \label{m-egv11}
\end{equation}
can be considered as a perturbation of problem \rf{m-egv}.
For arbitrary vector $\bk$, we denote by $\bkp$ the vector with the same directions as $\bk$ and with the magnitude $\kp$, i.e. $\dst \bkp = \kp\bet$.
If $a$ and $\varepsilon=\kp -|\bk|$ are small, then the perturbation is small from the point of view of physics. Since $\lambda=0$ is a simple eigenvalue of the unperturbed problem, it follows that the eigenvalue $\lambda=\lambda(a,\varepsilon,\hat{\bk})$ of problem \rf{m-egv11} is smooth when $a$ and $|\ei|$ are small. Hence one could find the dispersion relation by finding $\lambda(a,\varepsilon,\hat{\bk})$ and solving the equation $\lambda(a,\varepsilon,\hat{\bk})=0.$

 It is not very easy to fulfill rigorously the approach discussed above, since problem \rf{m-egv11} with $a\neq 0$ is a singular perturbation of problem \rf{m-egv}, and the standard perturbation technique can not be applied.  Thus we will consider problem 
 \rf{m-egv11} only with $\lambda=0$:
 \begin{equation}
 \left\{
 \begin{array}{rl}
  (\Delta+k^2_{\pm})u(\br) &= 0, \quad \br\in \Pi, \quad r\gtrless a, \\[2mm]
 \left\llbracket u (\br) \right\rrbracket \ &= \dst
\left\llbracket \frac{1}{\vr(\br)} \frac{\de u (\br)}{\de n} \right\rrbracket=0, \quad  \left\rrbracket  \E^{\I\bk \cdot \br} u(\br) \right\llbracket =0
 \end{array}
 \right.
 \label{m-egv1}
\end{equation}
 and will reduce it to an equivalent operator equation  
\begin{equation}\label{nn}
(N^+_{\bk}-N^-_a)\psi=0
\end{equation}
for which the standard perturbation theory is valid. Here 
 $\psi$ is a function on a sphere $r=R$ of a fixed radius $R<1$ (a circle if $d=2$), and operator $N^+_{\bk}-N^-_a$ (which will be defined below) is symmetric and Fredholm.   Moreover, $N^+_{\bk}-N^-_a$ is infinitely smooth in $\bk,a$, and  has a simple zero eigenvalue when $\bk=\bkp,~a=0$. Thus,  the dispersion relation can be found from \rf{nn} by the standard  perturbation theory.

\begin{figure}[ht]
\begin{center}
\begin{tikzpicture}[>=triangle 45,scale=2.0]
\draw[pattern=north west lines, step=0.5cm, thick] (-0.5,0) -- (0.5,2) -- (3.5,2) -- (2.5,0) -- (-0.5,0);
\draw [fill=white!50, thick] (1.5,1) circle [radius=0.7];
\draw [thick] (1.5,1) circle [radius=0.7];
\draw[fill=orange!40, opacity=1.0,thin] (1.5,1)  circle (5pt);
\node [above] at (1.7,1.3) {$B_R$};
\node [below] at (1.8,1) {$a$};
\draw[fill=white!50] (2.3,0.35) rectangle (2.6,0.7);
\node [below] at (2.45,0.7) {\Large $\Omega$};
\draw[thin] (-0.3,1) -- (3.3,1) node[right] {$x$};
\draw[thin] (1.5,-0.3) -- (1.5,2.3);
    \end{tikzpicture}
    \caption{Decomposition of the cell of periodicity $\
    \Pi=B_R \cup \Om$.}
    \label{OmegaB}
    \end{center}
\end{figure}
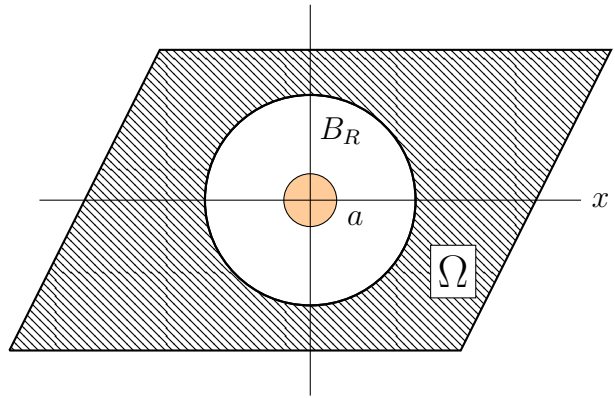

Let us define operators in \rf{nn}.  We split $\Pi$ into two domains $B_R = \{ {\br} \mid r < R\}$, such that $B_R \subset \Pi$, and its compliment $\Om = \Pi \smallsetminus B_R$ (see Figure \ref{OmegaB}). Constant $R$ will be fixed later, and $a \ll R$. Consider the following two separate problems in  $\Om $ and $ B_R$, instead of problem \rf{m-egv1}.
 \begin{align}
 \label{k-egv}
 \left( \Delta + k^2_{+} \right) \up(\br) &= 0, \quad  \br\in \Om,~~  \left\rrbracket  \E^{\I \bk \cdot \br} \up(\br) \right\llbracket =0, ~~ \left. \up\right|_{r=R}=\psi,\\
 \left( \Delta + k^2_{\pm} \right) v(\br) &= 0, \quad \br\in B_R \cap \{ r \gtrless a\}, ~~~~ \left\llbracket v (\br) \right\rrbracket =  \dst
\left\llbracket \frac{1}{\vr(\br)} \frac{\de v (\br)}{\de n} \right\rrbracket=0,~~~~ \left. v\right|_{r=R}=\psi.
 \label{uin}
\end{align}

Both problems with $\psi\in C^\infty$ have unique $C^\infty$-solutions for all values of $R$, except, possibly, a discrete set $\{R_i\}$. We fix an $R\notin \{R_i\}$ and define  operators $\Np_{\bk}$ and $\Nm_a$ as the Dirichlet-to Neumann operators with the derivatives in the direction of $r$:
\begin{equation}
\Np_{\bk},\Nm_a:H^{1}(\de B_R)\to L^{2}(\de B_R),
\quad~~ \Np_{\bk} \psi = \left.\frac{\de \up}{\de r}\right|_{r=R}, \quad  \quad
 \Nm_a \psi = \left.\frac{\de v}{\de r}\right|_{r=R},
\end{equation}
where $H^{s}(\de B_R)$ is the  Sobolev space of functions on $\de B_R$. Constant $a$ will be always assumed to be less than $R$.

\section[]{Analysis of operators $\N^+_{\bk},\N^-_{a}$}
\setcounter{equation}{0}

\begin{lemma}\label{l2}
\leavevmode
\begin{enumerate}
 \item[(1)]
 Operators $\Np_{\bk},~\Nm_a$ and
 \begin{equation}\label{npm}
\Np_{\bk}-\Nm_a:H^{1}(\de B_R)\to L^{2}(\de B_R)
 \end{equation}
are Fredholm.
\item[(2)]
Operators $\Np_{\bk}$ and $\Nm_a$ are symmetric in $L^{2}(\de B_R)$, i.e.
 \begin{equation}\label{sym}
  \int_{r=R} \left( \Np_{\bk} \psi \right) \overline{\phi}\, \D S = \int_{r=R} \psi \, \overline{\left(\Np_{\bk} \phi\right)}\, \D S,  \quad \psi,\phi\in H^{1}(\de B_R),
 \end{equation}
and the same relation holds for $\Nm_a$.
\end{enumerate}
\end{lemma}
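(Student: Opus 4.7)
My plan is to prove the symmetry claim (2) first by Green's identity applied to the natural self-adjoint form of the original divergence-form operator $L u = \nabla \cdot (\vr^{-1}\nabla u) + \gamma\om^2 u$, and then to deduce the Fredholm claim (1) by viewing $\Np_{\bk}$ and $\Nm_a$ as first-order elliptic pseudodifferential operators on the smooth surface $\de B_R$.

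For the symmetry, I would pair $Lu_1 = 0$ with $\overline{u_2}$ and integrate by parts over $B_R$. The transmission conditions \rf{bc1a} are exactly what makes the interface $r=a$ invisible: the product $\vr^{-1}(\de u_1/\de n)\overline{u_2}$ is continuous across it since $\vr^{-1}\de u/\de n$ and $u$ individually are. This produces the Hermitian-symmetric form
\[
 a(u_1,u_2) = \int_{B_R}\frac{1}{\vr}\nabla u_1\cdot\nabla\overline{u_2}\,\D V - \int_{B_R}\gamma\,\om^2 u_1\overline{u_2}\,\D V = \frac{1}{\vre}\int_{\de B_R}(\Nm_a\psi_1)\overline{\psi_2}\,\D S,
\]
so the symmetry of $a$ in $u_1,u_2$ forces the symmetry of $\Nm_a$ (the real constant $\vre^{-1}$ is harmless). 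The analogous calculation in $\Om$ gives $a^{+}(u_1,u_2) = -\vre^{-1}\int_{\de B_R}(\Np_{\bk}\psi_1)\overline{\psi_2}\,\D S$ plus a contribution on $\de\Pi$. The latter vanishes: on paired faces $F_{\pm}$ of the cell, the Floquet factor $\E^{-\I\bk\cdot\tb}$ from $u_1$ pairs with its conjugate $\E^{+\I\bk\cdot\tb}$ coming from $\overline{u_2}$ to give $1$, while the outward normals on the two faces point oppositely, so the contributions cancel in pairs.

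For the Fredholm claim I would decompose $\Nm_a = A + K_1$ and $\Np_{\bk} = -A + K_2$, where $A$ is a fixed first-order elliptic pseudodifferential operator on $\de B_R$ with principal symbol $|\xi'|$ (morally $(-\Delta_{\de B_R})^{1/2}$) and $K_1, K_2$ are of order $0$. The opposite signs reflect the fact that $\de/\de r$ is the outward normal of $B_R$ but the inward normal of $\Om$. Since $\de B_R$ is a compact smooth manifold, a zero-order PDO maps $H^1(\de B_R)$ continuously into itself and, composed with the Rellich inclusion $H^1 \hookrightarrow L^2$, becomes compact from $H^1(\de B_R)$ to $L^2(\de B_R)$; $A$ itself is elliptic of order one, hence Fredholm on that pair. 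Consequently $\Np_{\bk}$, $\Nm_a$, and also $\Np_{\bk}-\Nm_a = -2A + (K_2 - K_1)$ (still elliptic of order one, symbol $-2|\xi'|$) are all Fredholm. The main point to justify carefully is the decomposition $N = \pm A + K_j$ with $K_j$ of order zero; this amounts to a half-derivative regularity gain $\|\de u/\de r\|_{H^{1/2}(\de B_R)} \lesssim \|\psi\|_{H^1(\de B_R)}$ for the solutions of \rf{k-egv}--\rf{uin}, which for \rf{k-egv} is standard Floquet--elliptic regularity once $R\notin\{R_i\}$, and for \rf{uin} can be obtained by handling the transmission problem variationally in the form $a(\cdot,\cdot)$, which is coercive up to a compact perturbation because $\om$ is not a resonant frequency of the transmission eigenvalue problem on $B_R$.
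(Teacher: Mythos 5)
Your proposal follows essentially the same route as the paper on both counts. For the symmetry, both arguments are Green's identity; you are in fact more explicit than the paper about why the interface $r=a$ and the cell boundary $\de\Pi$ contribute nothing (continuity of $u$ and of $\vr^{-1}\de u/\de n$ across $r=a$, and the pairwise cancellation of the Floquet phases against the opposite outward normals on paired faces), which the paper compresses into ``the symmetry of the problem.'' For the Fredholm claim, the key fact is identical in both treatments: the two Dirichlet-to-Neumann maps are first-order elliptic pseudodifferential operators whose principal symbols are $+|\xi'|$ and $-|\xi'|$ because $\de/\de r$ is outward for $B_R$ but inward for $\Om$, so the difference has symbol $-2|\xi'|$ and stays elliptic. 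The paper verifies this by reducing to the model harmonic problem and computing the spherical-harmonic multipliers explicitly, as in \rf{dif}, together with the observation that the symbol is unaffected by lower-order terms and by changes to the problem away from $\de B_R$; you invoke the symbol calculus directly and the Rellich argument for the remainders, which is fine.

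One concrete correction is needed in the step you yourself flag as the crux. The estimate $\|\de u/\de r\|_{H^{1/2}(\de B_R)}\lesssim\|\psi\|_{H^{1}(\de B_R)}$ is false as stated: a Dirichlet-to-Neumann map genuinely loses a full derivative, so $\de u/\de r|_{r=R}$ is only in $L^{2}$ for $\psi\in H^{1}$. What your decomposition $N^{\pm}=\pm A+K_j$ actually requires is a regularity gain for the \emph{remainder}, i.e.\ $\|N^{\pm}\psi\mp A\psi\|_{H^{s}(\de B_R)}\lesssim\|\psi\|_{H^{1}(\de B_R)}$ for some $s>0$, which already makes $K_j:H^{1}(\de B_R)\to L^{2}(\de B_R)$ compact and yields the Fredholm property of $\Np_{\bk}$, $\Nm_a$ and of \rf{npm}. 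That corrected statement is true and can be obtained either by your variational argument applied to the difference of the two solutions, or by the paper's explicit model computation.
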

\begin{proof} 
Dirichlet-to-Neumann operators (for elliptic equations of the second order) are elliptic pseudo-differential operators of the first order (see \cite{vg}). Thus the first statement for operators  $\Np_{\bk},\Nm_a$ is obvious. If one proves that the difference $\Np_{\bk}-\Nm_a$ is still  elliptic (i.e., the principal symbol of the difference is not vanishing), then the first statement for \rf{npm} becomes an immediate consequence of the ellipticity. This approach could be completed, since a calculation of the symbols for Dirichlet-to-Neumann operators can be found in \cite{vg}. We prefer to prove the ellipticity of the difference by considering a simplified version $n^+-n^-$ of operator $\Np_{\bk}-\Nm_a$, where
\[
n^\pm\psi=\left.\frac{\de u^\pm}{\partial r}\right|_{r=R}
\]
are Dirichlet-to-Neumann operators related to the following problems (which are simplified versions of \rf{k-egv}, \rf{uin}):
\[
\Delta u^+=0, \quad R<r<1, \quad \left.u^+\right|_{r=R}=\psi, \quad \left.u^+\right|_{r=1}=0,
\]
\[
\Delta u^-=0, \quad r<R, \quad \left.u^-\right|_{r=R}=\psi.
\]
From local a priori estimates for elliptic operators it follows that a perturbation of the boundary value problem outside of a neighborhood of the boundary on which the Dirichlet-to-Neumann operator is defined  does not change the symbol of a Dirichlet-to-Neumann operator. The principal symbols of Dirichlet-to-Neumann operators do not depend also on the lower order terms of the equations. Thus, the principal symbols of operators $n^\pm$ are the same as the ones for operators $\Np_{\bk},\Nm_a$, respectively. Hence, the first statement of the lemma will be proved if we prove that $n^+-n^-$ is an elliptic operator of the first order. For this purpose, it is enough to show that operator
\begin{equation}\label{n+-}
n^+-n^-:H^1(\partial B_R)\to L^{2}(\partial B_R)
\end{equation}
is an isomorphism.
Consider the three-dimensional case (the two-dimensional case is similar).
Denote by $\widetilde{\Delta}$ the Laplace-Beltrami operator on the sphere $\partial B_R$. Its spectrum consists of the eigenvalues $\lambda_n=n(n+1), ~n\geq0,$ of multiplicity $2n+1$.
Let $\psi\in L^2(\partial B_R)$ and let $\psi_n$ be the projection of $\psi$ into the eigenspace of $\widetilde{\Delta}$ with the eigenvalue  $\lambda_n=n(n+1)$. Thus $\psi=\sum_{n=0}^\infty\psi_n$, and the norms of $\psi$ can be defined as follows:
\begin{equation}\label{norm1}
\|\psi\|^2_{L^2(\partial B_R)}=\|\sum_{n=0}^\infty\psi_n\|^2_{L^2(\partial B_R)},
\end{equation}
\begin{equation}\label{norm2}
\|\psi\|^2_{H^{1}(\partial B_R)}=\|\psi_0\|^2_{L^2(\partial B_R)}+\sum_{n=1}^\infty\|\widetilde{\Delta}^{1/2}\psi_n\|^2_{L^2(\partial B_R)}=
\|\psi_0\|^2_{L^2(\partial B_R)}+\sum_{n=1}^\infty n(n+1)\|\psi_n\|^2_{L^2(\partial B_R)}.
\end{equation}

Let us now estimate $(n^+-n^-)\psi$.
Obviously,
\[
u^+=\sum_{n=0}^\infty\frac{r^n-r^{-n-1}}{R^n-R^{-n-1}}\,\psi_n, \quad u^-=\sum_{n=0}^\infty\frac{r^n}{R^n}\,\psi_n,
\]
and therefore
\begin{equation}\label{dif}
(n^+-n^-)\psi=\left[\frac{\partial}{\partial r}\sum_{n=0}^\infty\frac{R^{-n-1}r^n-R^nr^{-n-1}}{R^n(R^n-R^{-n-1})}\,\psi_n\right]_{r=R}=\sum_{n=0}^\infty\frac{2n+1}{R+R^{2n+2}}\,\psi_n.
\end{equation}
Since $R<1$, the isomorphism of map \rf{n+-}  follows immediately from \rf{norm1}-\rf{dif}. This completes the proof of the first statement of the lemma.

Let us prove \rf{sym}. From the symmetry of the problem \rf{k-egv} and Green's second identity it follows that
 \[
  \int_{r=R}(u_r\overline{v}-u\overline{v}_r)\,\D S=0
 \]
 for solutions $u,v$ of \rf{k-egv} with data $\psi,\phi$ at $r=R$, respectively. The latter relation coincides with \rf{sym}.
\end{proof}

\begin{lemma}\label{l1}
\leavevmode
\begin{itemize}
\item[(1)]
Relation $\psi=u|_{r=R}$ is a one-to-one correspondence between solutions $u\in C^\infty$ of \rf{m-egv1} and solutions $\psi\in H^1(\de B_R)$ of  \rf{nn}.
\item[(2)]
Zero is a simple eigenvalue of the operator $(\Np_\bkp-\Nm_0)$ with the eigenfunction 
\begin{equation}\label{psi0}
\ph:=\E^{-\I \bkp \cdot \br}|_{r=R} = \E^{-\I \kp R \,\hat{\bk}\cdot \hat{\br}}.
\end{equation}
\end{itemize}
\end{lemma}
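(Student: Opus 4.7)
The proof splits along the two statements. For part (1), I would construct both directions of the correspondence and verify they are mutually inverse. \emph{Forward direction}: given a $C^\infty$ solution $u$ of \rf{m-egv1}, set $\psi:=u|_{r=R}$ and let $u^+:=u|_\Om$, $v:=u|_{B_R}$. Since $R\notin\{R_i\}$, these restrictions are necessarily the unique solutions of \rf{k-egv} and \rf{uin} with boundary datum $\psi$, so by definition $\Np_\bk\psi=\de_r u^+|_{r=R}$ and $\Nm_a\psi=\de_r v|_{r=R}$. Because $\{r=R\}$ lies strictly outside the inclusion (we have $a<R$), the Helmholtz equation has constant coefficients across that sphere, and the smoothness of $u$ forces its one-sided radial derivatives to agree, giving $(\Np_\bk-\Nm_a)\psi=0$.

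\emph{Reverse direction}: given $\psi\in H^1(\de B_R)$ with $(\Np_\bk-\Nm_a)\psi=0$, I would solve \rf{k-egv} and \rf{uin} for $u^+$ on $\Om$ and $v$ on $B_R$ respectively, and glue them into a single function $u$ on $\Pi$. Matching of the Dirichlet traces at $\{r=R\}$ holds by construction, and matching of the Neumann traces is precisely the hypothesis $(\Np_\bk-\Nm_a)\psi=0$; hence $u$ satisfies $(\Delta+\kp^2)u=0$ in the sense of distributions in a neighborhood of $\{r=R\}$ (no surface delta appears), and interior elliptic regularity then promotes $u$ to $C^\infty$ across that sphere. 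Elsewhere in $\Pi\smallsetminus\de\cB$ the function $u$ coincides with $u^+$ or $v$ and is already $C^\infty$; the transmission conditions on $\{r=a\}$ are inherited from \rf{uin} and the Floquet condition from \rf{k-egv}. Uniqueness of the auxiliary problems makes these two assignments mutual inverses.

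For part (2), I would specialize to $a=0$ and $\bk=\bkp$. The inclusion interface then disappears and $|\bkp|=\kp$, so problem \rf{m-egv1} collapses to \rf{m-egv} with $\lambda=0$, whose kernel is one-dimensional and spanned by $\E^{-\I\bkp\cdot\br}$ as recorded at the beginning of Section 3. Part (1) transports this to a one-dimensional kernel of $\Np_\bkp-\Nm_0$ spanned by $\ph=\E^{-\I\kp R\bet\cdot\brt}$, and the symmetry asserted in Lemma \ref{l2} rules out generalized eigenvectors, so zero is a simple eigenvalue. The main obstacle is the regularity bootstrap in the reverse direction of part (1): one must confirm that a merely $H^1$ boundary datum $\psi$ yields, after gluing, a globally $C^\infty$ solution. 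This is exactly what requires the decomposition $a<R$, so that the coefficients of \rf{Hz1} are constant across $\{r=R\}$ and a standard interior regularity argument applies; the remaining steps are essentially algebraic.
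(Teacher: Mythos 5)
Your proposal is correct and follows essentially the same route as the paper: restrict/glue across the sphere $r=R$, use the matching of Dirichlet and Neumann traces to identify solutions of \rf{m-egv1} with solutions of \rf{nn}, and deduce part (2) from part (1) applied to the unperturbed problem with $a=0$, $\bk=\bkp$. The only (harmless) differences are that the paper obtains $\psi\in C^\infty$ directly from the ellipticity of the pseudodifferential equation \rf{nn} rather than from interior regularity of the glued solution, and that you make explicit the use of symmetry to pass from a one-dimensional kernel to simplicity of the zero eigenvalue.
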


\begin{proof} Let $u\in C^\infty$ be a solution  of \rf{m-egv1}. Then its restrictions to $\Omega$ and $B_R$ satisfy \rf{k-egv}, \rf{uin}, respectively, with  $\psi=u|_{r=R}$. Moreover, relation  \rf{nn} holds, since $\Np_{\bk}\psi=\Nm_a\psi=u_r|_{r=R}$. Conversely, let $\psi\in H^1(\de B_R)$ satisfy  \rf{nn}. The ellipticity of the equation  \rf{nn} implies that $\psi\in C^\infty$. Let $u$ coinside with solutions of  \rf{k-egv}, \rf{uin} in  $\Omega$ and $B_R$, respectively. Then from \rf{nn} it follows that equation $(\Delta + k^2_{+})u=0$ holds also on $\de R_R$, and therefore $u$ is a solution of  \rf{m-egv1}. The first statement of the lemma is proved. The second statement follows from the first one applied to the unperturbed problem ($\bk=\bkp, a=0$) and the fact that zero is a simple eigenvalue of the problem \rf{m-egv} with the eigenfunction $\E^{-\I \bkp \cdot \br}|_{r=R}$.
\end{proof}
We will write each element $f$ in the domain  $H^{1}(\de B_R)$  and range $L^{2}(\de B_R)$ of operator \rf{npm} in the vector form $f=( f_1,f_L),$ where $f_1=c\psi_0$ is the projection of $f$ in $L^{2}(\de B_R)$ on element \rf{psi0}
and $f_L$ is orthogonal to $\ph$ in $L^{2}(\de B_R)$. Denote by $L_1,~L_0$ the subspaces of $H^{1}(\de B_R),~L^{2}(\de B_R)$, respectively, that consist of functions orthogonal in $L^{2}(\de B_R)$ to  $\psi_0$. Then, due to Lemmas \ref{l2},\ref{l1},  operator \rf{npm} has the following matrix form
\begin{equation}
 \Np_{\bkp} - \Nm_0 = \left[
 \begin{array}{cc}
  0 & 0 \\[2mm]
  0 & A
 \end{array}
 \right],
\end{equation}
where $A:L_1\to L_0$ is an isomorphism.

Obviously, operator $ \Np_{\bk}$ is infinitely smooth function of $\bk$. Thus the operator $\Np_{\bk} - \Nm_0$ in the same basis chosen for $\bk=\bkp$ has a matrix representation
\begin{equation}\label{conc}
 \Np_{\bk} - \Nm_0 = \left[
 \begin{array}{cc}
  C\ei + O(\ei^2) & O(\ei) \\[2mm]
  O(\ei) & A + O(\ei)
 \end{array}
 \right]=\left[
 \begin{array}{cc}
  C\ei + \ei^2 D_{11} & \ei D_{12} \\[2mm]
  \ei D_{21} & A + \ei D_{22}
 \end{array}
 \right], \quad \ei=\kp -|\bk|,
\end{equation}
where $C$ is a constant, $|\ei| \ll 1$ and $  D_{ij}=D_{ij}(\ei,\kp,\hat{\bk}),~\kp>0,$ are infinitely smooth functions of all the arguments. We will evaluate constant $C$ in the next Lemma.

\begin{lemma}\label{l33}
 Constant $C$ in the matrix expansion \rf{conc} of the operator
 $\Np_\bk - \Nm_0$ is equal
 \begin{equation}
 \label{C}
  C = 2\kp |\Pi|
 \end{equation}
in both dimensions $d=2$ and $d=3$.
\end{lemma}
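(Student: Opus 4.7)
The plan is to identify the $(1,1)$ entry of the matrix \rf{conc} with the sesquilinear form value $\langle (\Np_\bk - \Nm_0)\ph, \ph\rangle_{L^2(\de B_R)}$ and to extract its coefficient of $\ei$. Since $\Nm_0$ is independent of $\bk$, and since at $\bk = \bkp$ both the exterior and interior problems are solved by $\E^{-\I\bkp\cdot\br}$ (so $\Np_\bkp\ph = \Nm_0\ph$), the constant term in $\ei$ vanishes. What remains is to compute the $\ei$-linear coefficient of $\displaystyle\int_{\de B_R} \frac{\de u^+_\bk}{\de r}\,\bar\ph\,\D S$, where $u^+_\bk$ solves \rf{k-egv} with data $\ph$.

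First I would expand the Bloch solution in $\ei$. Writing $\bk = \bkp - \ei\bet$ and using the Floquet form $u^+_\bk = \E^{-\I\bk\cdot\br}\Phi_\bk$ with periodic $\Phi_\bk$ on $\Pi$ and $\Phi_\bkp = 1$, set $\Phi_\bk = 1 + \ei\phi_1 + O(\ei^2)$ with $\phi_1$ periodic. Combining this with $\E^{-\I\bk\cdot\br} = \E^{-\I\bkp\cdot\br}(1 + \I\ei\bet\cdot\br + O(\ei^2))$ yields $u^+_\bk = u_0 + \ei w + O(\ei^2)$, where $u_0 := \E^{-\I\bkp\cdot\br}$ and $w = u_0(\I\bet\cdot\br + \phi_1)$. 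The boundary condition $u^+_\bk|_{r=R} = \ph$ forces $w|_{r=R} = 0$, and $w$ satisfies $(\Delta + \kp^2)w = 0$ in $\Om$.

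The main step is to apply Green's second identity in $\Om$ to $w$ and $\bar u_0$. Since both solve $(\Delta + \kp^2)u = 0$, the volume term vanishes; using $w|_{r=R}=0$ and that the outward normal to $\Om$ on $\de B_R$ is $-\hat r$, we obtain
\begin{equation*}
\int_{\de B_R}\bar u_0\frac{\de w}{\de r}\,\D S = \int_{\de\Pi}\Big[\bar u_0\frac{\de w}{\de n} - w\frac{\de\bar u_0}{\de n}\Big]\D S.
\end{equation*}
A direct calculation shows $\bar u_0\nabla w - w\nabla\bar u_0 = 2\bkp(\bet\cdot\br) - 2\I\bkp\phi_1 + \I\bet + \nabla\phi_1$. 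Dotting with $\bn$ and integrating over $\de\Pi$, the last three terms vanish: the $\phi_1$-pieces integrate to zero because $\phi_1$ and $\nabla\phi_1$ are periodic while $\bn$ flips sign on opposite faces, and $\int_{\de\Pi}\bet\cdot\bn\,\D S = 0$ by the divergence theorem. Only $2(\bkp\cdot\bn)(\bet\cdot\br)$ survives, and the divergence theorem applied to the vector field $(\bet\cdot\br)\bet$ gives
\begin{equation*}
2\kp\int_{\de\Pi}(\bet\cdot\bn)(\bet\cdot\br)\,\D S = 2\kp\int_\Pi|\bet|^2\,\D V = 2\kp|\Pi|,
\end{equation*}
so $C = 2\kp|\Pi|$.

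The two-dimensional case is handled identically, with surface integrals replaced by line integrals; the divergence theorem again produces $|\Pi|$ (now an area). The main subtlety is the clean decomposition of $w$ into the linearly growing piece $u_0(\I\bet\cdot\br)$, produced by perturbing the Bloch wavenumber, and the periodic correction $u_0\phi_1$. Only the growing piece survives the opposite-face cancellation on $\de\Pi$, so no explicit formula for $\phi_1$ is ever needed.
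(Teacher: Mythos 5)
Your proof is correct and reaches the paper's value $C=2\kp|\Pi|$, but it organizes the computation differently from the paper, so a comparison is worth recording. Both arguments identify $C$ with the coefficient of $\ei$ in $q^+=\left(\Np_{\bk}\ph,\ph\right)$ (the term $\left(\Nm_0\ph,\ph\right)$ being $\ei$-independent) and both expand $u^+=\E^{-\I\bkp\cdot\br}+\ei u_1+O(\ei^2)$. The paper keeps $u_1$ as a solution of the homogeneous Helmholtz equation with the inhomogeneous jump $\rrbracket \I\hat{\bk}\cdot\br\llbracket$ on $\de\Pi$, then substitutes $u_1=w+\I(\hat{\bk}\cdot\br)\E^{-\I\bkp\cdot\br}$ to restore periodicity at the price of the source term $-2\kp\E^{-\I\bkp\cdot\br}$; Green's identity over $\Om$ then yields the volume contribution $2\kp|\Om|$ plus two explicit surface integrals over $\de B_R$, each equal to $\kp R|\de B_R|/d=\kp|B_R|$, and the three pieces sum to $2\kp|\Pi|$. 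You instead factor the first-order corrector as $u_1=\E^{-\I\bkp\cdot\br}\left(\I\bet\cdot\br+\phi_1\right)$ with $\phi_1$ periodic (the derivative of the Bloch amplitude), keep the equation homogeneous, and use Green's identity to transport the entire integral from $\de B_R$ to $\de\Pi$, where the periodic pieces cancel across opposite faces and only the secular term $\I\bet\cdot\br$ survives, giving $2\kp|\Pi|$ in one stroke from the divergence theorem over $\Pi$. Your route avoids the explicit spherical integrals and treats $d=2$ and $d=3$ uniformly; the only point you should make explicit is that the expansion $\Phi_{\bk}=1+\ei\phi_1+O(\ei^2)$ with periodic $\phi_1$ (and with the remainder controlled in $C^1$ up to $\de\Pi$) is legitimate, which follows, exactly as in the paper's version, from the fact that after the substitution $\bk=\bkp-\ei\hat{\bk}$ the problem in $\Om$ is a regular perturbation of the uniquely solvable problem at $\ei=0$.
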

\begin{proof}
Recall that $\ei = \kp -|\bk|$ and $\bkp=\kp\hat{\bk}$. Hence $\bk=\bkp-\ei\hat{\bk}$, and the boundary condition on $\de\Pi$ in \rf{k-egv} can be written in the form $ \left\rrbracket  \E^{\I (\bkp -\ei\hat{\bk})\cdot \br} u(\br) \right\llbracket =0$. Then problem \rf{k-egv}  in $\Omega=\Pi\setminus B_R$ becomes a regular perturbation of the same problem with $\bk=\bkp$. Thus constant $C$ in \rf{conc} is the coefficient in the linear term of the Taylor expansion of functions $q^+-q^-$ as $\ei\to 0$ (i.e., $\bk\to\bkp$), where
\begin{equation}\label{cpm}
 q^+ = \left( \Np_{\bk} \ph, \ph \right) =
 \int_{r=R} \frac{\de u^+}{\de r}\, \E^{\I \bkp \cdot \br}\, \D S, \quad q^- = \left( \Nm_0 \ph, \ph \right) =
 \int_{r=R} \frac{\de v}{\de r}\, \E^{\I \bkp \cdot \br}\, \D S.
\end{equation}
Here $\ph$  and $u^+,v$ are solutions of \rf{k-egv}, \rf{uin}, respectively,  with  $\psi=\ph$ and without the inclusion in problem \rf{uin}.

Let us evaluate $q^+$.  By the choice of $R$, problem \rf{k-egv} with $\bk=\bkp$ is uniquely solvable. If  $\psi=\ph$, then its solution is $\E^{-\I \bkp \cdot \br}$. Thus, solution $u^+$ of \rf{k-egv} can be expanded into the Taylor series in $\varepsilon$ with the zero order term being $\E^{-\I \bkp \cdot \br}$, i.e.,
\begin{equation}\label{111}
 u^+(\br)=\E^{-\I \bkp \cdot \br}+\varepsilon u_1(\br)+O(\varepsilon^2), \quad \br\in \Omega,
\end{equation}
where the remainder term is small uniformly together with all its derivatives in $\br$, and $u_1$ is the solution of the problem 
\begin{equation}\label{cpl}
 \begin{array}{l}
(\Delta  +k^2_{+} ) u_1(\br) =0, \quad \br\in \Omega, \\[2mm]
  \left\rrbracket \E^{\I \bkp \cdot \br}u_1(\br) \right\llbracket =\left\rrbracket \I \hat{\bk} \cdot \br \right\llbracket , \quad \left. u_1(\br) \right|_{r=R} = 0.
 \end{array}
\end{equation}
We note that
\begin{equation}
 \int_{r=R} \frac{\de \E^{-\I\bkp \cdot \br}}{\de r}\, \E^{\I\bkp \cdot \br}\, \D S=-\I\int_{r=R} \bkp \cdot \hat{\br}\, \D S=0.
\end{equation}
Thus from \rf{cpm} and \rf{cpl} it follows that
\begin{equation}
\label{qp}
q^+=\varepsilon\int_{r=R} \frac{\de u_1(\br)}{\de r}\, \E^{\I \bkp \cdot \br}\, \D S+O(\varepsilon^2), \quad \varepsilon\to 0.
\end{equation}
In order to evaluate the integral above, we make the substitution $u_1(\br)=w(\br)+\I( \hat{\bk} \cdot \br)\E^{-\I \bkp \cdot \br}$, which reduces \rf{cpl} to the following problem for $w$ with the homogeneous boundary condition on $\de\Pi$:
\begin{equation}\label{cplw}
 \begin{array}{l}
(\Delta  +k^2_{+} ) w(\br) =-2\kp \E^{-\I \bkp \cdot \br}, \quad \br\in \Omega, \\[2mm]
  \left\rrbracket \E^{\I \bkp \cdot \br}w(\br) \right\llbracket =0 , \quad \left. w(\br) \right|_{r=R} = -\I (\hat{\bk} \cdot \br)\E^{-\I \bkp \cdot \br}.
 \end{array}
\end{equation}
Denote the integral in \rf{qp} by $q^+_1$. Then
\[
q^+_1=\I\int_{r=R} \frac{\de ((\hat{\bk} \cdot \br)\E^{-\I\bkp \cdot \br})}{\de r}\, \E^{\I\bkp \cdot \br}\, \D S+\int_{r=R} \frac{\de  w(\br)}{\de r}\, \E^{\I \bkp \cdot \br}\, \D S:=q^+_{11}+q^+_{12}.
\]
We have
\[
q^+_{11}=\I\int_{r=R}(\hat{\bk} \cdot \hat{\br})\, \D S+\kp R\int_{r=R}(\hat{\bk} \cdot \hat{\br})^2 \, \D S.
\]
The first integrand above is odd, and the corresponding integral is zero. Depending on the dimension $d=2,3$, the second integral above is $1/d $ times the Lebesgue measure of  $\de B_R$. Thus $\dst q^+_{11}=\frac{\kp R}{d}|\de B_R|$,
where $|\de B_R|=2\pi R$ in the dimension $d=2$ and $|\de B_R|=4\pi R^2$ if $d=3$. 

Using the Green formula and the symmetry of boundary problem \rf{cplw}, we obtain
\[
q^+_{12}=\int_{r=R} w(\br)\frac{\de\E^{\I \bkp \cdot \br}}{\de r}\, \D S-\int_{\Omega}[(\Delta  +k^2_{+} )  w(\br)]\E^{\I \bkp \cdot \br}\,d\br=\frac{\kp R}{d}|\de B_R|+2\kp |\Omega|.
\]
Hence
\begin{equation}
\label{q1p}
q^+_1=2\frac{\kp R}{d}|\de B_R|+2\kp |\Omega|=2\kp|\Pi|.
\end{equation}

Now we evaluate
\begin{equation*}
 \left( \Nm _0\E^{-\I \bkp \cdot \br}, \E^{-\I \bkp \cdot \br} \right) =
 \int_{r=R} \frac{\de v}{\de r}\, \E^{\I \bkp \cdot \br}\, \D S.
\end{equation*}
Function $v$ solves the problem
\begin{align*}
 \Delta v + k^2_{+} v &=0, \quad r<R, \\[2mm]
 \left. v \right|_{r=R} &= \E^{-\I \bkp \cdot \br}.
\end{align*}
Then
\begin{align*}
 q^- &= \int_R \frac{\de v}{\de r}\,\E^{\I \bkp \cdot \br}\, \D S =
 \int_R v\frac{\de }{\de r}\,\E^{\I \bkp \cdot \br}\, \D S \nonu \\[2mm]
 &=  \int_R \E^{-\I \bkp \cdot \br }\frac{\de }{\de r}\,\E^{\I \bkp \cdot \br}\, \D S =
 \I \kp \int_R \bkp \cdot \hat{\br}\, \D S.
\end{align*}
The latter integrand is odd, and therefore $q^-=0$. This fact together with \rf{q1p} imply \rf{C}.

\end{proof}

The next two lemmas are crucial in what follows, since they allow us to forget about the singularity of problem \rf{m-egv1} as $a\to 0$.
\begin{lemma}\label{l3}
Let $d=3$. Then the operator function $ \Nm_a-\Nm_0:H^1(B_R)\to L^2(B_R)$ is infinitely smooth in $a\geq0$, and there are bounded operators $P$ and $Q$ such that
\begin{equation}
 \label{l31}
 \Nm_a-\Nm_0=Pa^3+Qa^5+O(a^6), \quad a\to0,
\end{equation}
Moreover,
\begin{equation}
\label{l32}
\left( (\Nm_a -\Nm_0) \ph, \ph \right)  =
\frac{\om^2}{c^2 }\, |\Pi| \left(
c_1 a^3 + c_2 a^5 + O\left(a^6 \right)\right), \quad a \to 0,
\end{equation}
where $c_1$ and $c_2$ are given by \rf{c1}, \rf{c2} in Theorem \ref{t1}.
\end{lemma}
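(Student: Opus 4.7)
The plan is to reduce the bilinear form to two integrals localized at the inclusion via Green's identity, and then to carry out matched asymptotics in $a$ to extract the explicit coefficients.

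First I would exploit Green's identity in the annulus $B_R\setminus\overline{B_a}$, where both the exterior part $v_a^+$ of the transmission solution and the inclusion-free solution $v_0:=\E^{-\I\bkp\cdot\br}$ satisfy the Helmholtz equation with $k_+^2$.  Using $v_a-v_0=0$ on $\partial B_R$ together with the transmission conditions $v_a^+=v_a^-$ and $\partial_r v_a^+=(\vre/\vri)\partial_r v_a^-$ on $\partial B_a$, followed by a second Green's identity inside $B_a$ for $v_a^-$ against $\overline{v_0}$, yields the identity
\begin{equation*}
 \left((\Nm_a-\Nm_0)\ph,\ph\right) = \left(\frac{\vre}{\vri}-1\right)\int_{\partial B_a}\partial_r v_a^-\,\overline{v_0}\,\D S + (k_+^2-k_-^2)\int_{B_a}v_a^-\,\overline{v_0}\,\D V,
\end{equation*}
in which both terms are supported on the vanishing inclusion and therefore start at order $a^3$.

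Second I would analyze $v_a^-$ by matched asymptotics on the rescaled variable $\tilde{\br}=\br/a$.  After rescaling, the interior problem becomes $(\Delta_{\tilde{\br}}+(k_-a)^2)\tilde{v}=0$ on the unit ball, matched through the transmission conditions on $\partial B_1$ to an outer expansion that agrees with $v_0(\br)=1-\I\bkp\cdot\br-\oh(\bkp\cdot\br)^2+O(|\br|^3)$.  At leading order this is the classical Laplace transmission problem on the sphere; the monopole is trivial at this order, while the dipole gives the well-known sphere-in-uniform-field polarization $v_a^-=1-\frac{3\vri}{\vre+2\vri}\I\bkp\cdot\br+O(a^2)$.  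Higher spherical-harmonic modes contribute at strictly higher orders in $a$.

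Third I would substitute these expansions into the identity and collect powers of $a$.  At order $a^3$ the surface integral, after integrating the angular factor $(\bkp\cdot\hat{\br})^2$ over $\partial B_a$, gives a contribution proportional to the factor $(\vre-\vri)/(\vre+2\vri)$ times $|\bkp|^2\cdot a^3$, while the volume integral gives $(k_+^2-k_-^2)\cdot\frac{4}{3}\pi a^3$.  Combined and rewritten using $f=4\pi a^3/(3|\Pi|)$ these reproduce $c_1 a^3$ as in \rf{c1}.  The coefficient $c_2 a^5$ is then assembled from three sources appearing at the same order: (a) the $O((k_\pm a)^2)$ corrections to the monopole ($\ell=0$) and dipole ($\ell=1$) coefficients from Bessel-function expansions, (b) the quadrupole ($\ell=2$) response excited by the Hessian $-\oh(\bkp\cdot\br)^2$ of $v_0$ at the origin, and (c) the $O(a^2)$ Taylor correction to $\overline{v_0}$ along $\partial B_a$ multiplied against the leading dipole of $v_a^-$ in the surface integral.

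Finally, the smoothness assertion \rf{l31} is obtained by observing that, after rescaling, the interior transmission problem on the unit ball is a regular analytic perturbation of the $a=0$ problem: $v_a^{\pm}$ admits a layer-potential representation on $\partial B_a$ whose density solves a Fredholm integral equation with coefficients depending smoothly on $a\geq 0$.  The operator-valued map $a\mapsto\Nm_a$ inherits this smoothness.  The main obstacle is the bookkeeping at order $a^5$, where the Clausius--Mossotti dipole polarization, the monopole bulk-modulus mismatch, the Bessel-function next-order corrections, and the newly excited quadrupole all contribute at the same algebraic order and must be tracked consistently to recover \rf{c2}.
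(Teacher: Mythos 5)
Your route is genuinely different from the paper's: the paper diagonalizes $\Nm_a-\Nm_0$ explicitly in spherical harmonics (representation \rf{Nma} with spherical Bessel functions, the determinants $d_n$, and the uniform estimate $|f_n/(d_n j_n(\kp R))|\sim a^{2n+1}$), whereas you localize the quadratic form at the inclusion via Green's identity and then run matched asymptotics. Your identity $\left((\Nm_a-\Nm_0)\ph,\ph\right)=\bigl(\tfrac{\vre}{\vri}-1\bigr)\int_{\de B_a}\de_r v_a^-\,\overline{v_0}\,\D S+(k_+^2-k_-^2)\int_{B_a}v_a^-\,\overline{v_0}\,\D V$ is correct, and your inclusion of the quadrupole at order $a^5$ is right in principle: the $\ell=2$ eigenvalue of $\Nm_a-\Nm_0$ is of exact order $a^{5}$ (consistent with the paper's own bound $a^{2n+1}$ at $n=2$), yet no $\psi_2$ term appears in the paper's expansion \rf{diff}, so be prepared for a quadrupole contribution $\propto a^5\kp^4(\vre-\vri)/(2\vre+3\vri)$ with no counterpart in \rf{c2}. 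The main structural shortfall of your approach is that the quadratic form on the single function $\ph$ does not give the operator statement \rf{l31}: that requires a bound on $\Nm_a-\Nm_0$ as a map $H^1(\de B_R)\to L^2(\de B_R)$, i.e.\ uniform control of \emph{every} spherical harmonic mode, which the paper extracts from the uniform-in-$n$ Bessel asymptotics; your layer-potential remark would have to be upgraded to an operator-norm estimate of that kind, and you should also record (even if only to dismiss at order $a^6$) the $O(a^3)$ back-reaction of the scattered field through the boundary $r=R$ on the field incident on the inclusion.

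There is also a concrete error in your order count at $O(a^3)$. The interior monopole is not trivial for the purposes of the surface integral: the $\ell=0$ part of $v_a^-$ is proportional to $j_0(\km r)=1-(\km r)^2/6+\dots$, whose radial derivative on $\de B_a$ is $-\km^2a/3+O(a^3)$; multiplied by $|\de B_a|=4\pi a^2$ this feeds the surface term already at order $a^3$, contributing $-\tfrac{4\pi}{3}\km^2a^3(\vre-\vri)/\vri$. With only your two stated pieces (dipole surface term plus monopole volume term) you obtain, in the normalization of \rf{l32}, the coefficient $\tfrac{4\vre-\vri}{\vre+2\vri}-\tfrac{\gi}{\ge}\tfrac{\vri}{\vre}$ in place of $\tfrac{4\vre-\vri}{\vre+2\vri}-\tfrac{\gi}{\ge}$ (which is, up to overall sign, the $c_1$ of \rf{c1}); the discrepancy $\tfrac{\gi}{\ge}\bigl(\tfrac{\vri}{\vre}-1\bigr)f$ is exactly the omitted monopole surface contribution. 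So as written the proposal does not reproduce \rf{c1}. The same caution applies at order $a^5$: every $O((k_\pm a)^2)$ correction to an interior mode gains one power of $a$ in the surface integral relative to the volume integral, so the "(a), (b), (c)" bookkeeping must be redone with the surface and volume terms counted separately.
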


\begin{proof}We prove this lemma using explicit construction of operator $\Nm_a-\Nm_0$.
We denote the Laplace-Beltrami operator on the sphere $\de B_R$ by $\widetilde{\Delta}$. Its spectrum consists of the eigenvalues $\lambda_n=n(n+1), ~n\geq0,$ of multiplicity $2n+1$ (and its eigenfunctions are the products of the associated Legendre polynomials of $\cos\theta$ and the exponents $e^{im\phi}$). Let  $\psi_n$ be the projection in $L^2(\de B_R)$ of $\psi$ on the eigenspace of $\widetilde{\Delta}$ with the eigenvalue  $\lambda_n=n(n+1)$. Thus $\psi=\sum_{n=0}^\infty\psi_n$. Then solution of problem  \rf{uin} has the form
\begin{align}
 \label{Nma}
 u = \left\{
 \begin{array}{ll}
 \dst \sum_{n=0}^\infty a_n\, j_{n}(\km r)\,\psi_n, \quad 0 \leq r < a, \\[2mm]
 \dst \sum_{n=0}^\infty \left[ b_n\, j_{n}(\kp r)
 + c_n \,y_n (\kp r) \right]\psi_n, \quad a < r <R,
 \end{array}
 \right.
\end{align}
where $j_n,y_n$ are the spherical Bessel functions and constants $a_n,b_n,c_n$ are determined from the the boundary conditions in \rf{uin} . 
Solving the system above, we obtain that
\begin{align}
 a_n &= \frac{\kp}{\vre} \left[j_n (\kp a) y^\prime_n (\kp a)
 - j_n^\prime (\kp a) y_n (\kp a)\right]d_n^{-1} = \left( \vre \kp a^2 d_n \right)^{-1}, \\[2mm]
 b_n &= \left[\frac{\kp}{\vre}\, j_n (\km a) y^\prime_n (\kp a)
 - \frac{\km}{\vri}\, j_n^\prime (\km a) y_n (\kp a)\right] d_n^{-1},\\[2mm]
 c_n &= \left[\frac{\km}{\vri}\, j_n^\prime (\km a) j_n (\kp a)
 - \frac{\kp}{\vre}\, j_n (\km a) j_n^\prime (\kp a)\right] d_n^{-1},
\end{align}
where
\begin{align}
d_n &=
\frac{\kp}{\vre}\, j_n (\km a) \left[
j_n (\kp R) y^\prime_n (\kp a) -  j_n^\prime (\kp a) y_n (\kp R) \right] \nonu \\[2mm]
&+ \frac{\km}{\vri}\, j_n^\prime (\km a) \left[
j_n (\kp a) y_n (\kp R) -  j_n (\kp R) y_n (\kp a) \right].
\end{align}
Solution of the problem \rf{uin} without the inclusion has the form
\[
u= \sum_{n=0}^\infty\frac{ j_{n}(\kp  r)}{ j_{n}(\kp R)}\,\psi_n.
\]
 Hence, $\dst \Nm_0 \psi= \kp \sum_{n=0}^\infty\frac{ j_{n}'(\kp  R)}{ j_{n}(\kp R)}\,\psi_n$, and
\begin{align} \label{514}
 &\Nm_a \psi -\Nm_0 \psi = \kp \sum_{n=0}^\infty \left[ b_n\, j_{n}^\prime(\kp R)
 + c_n \,y_n^\prime (\kp R) -\frac{j_n^\prime (\kp R)}{j_n (\kp R)}\right]\psi_n = \sum_{n=0}^\infty \frac{f_n}{d_n\, j_n (\kp R)}\, \psi_n, 
\end{align}
where
\[
 f_n = \frac{1}{\kp R^2} \left[ \frac{\km}{\vri}\, j_n (\kp a) j_n^\prime (\km a)
 - \frac{\kp}{\vre}\, j_n (\km a) j_n^\prime (\kp a) \right].
\]
From the asymptotic behavior of Bessel functions $J_n(x), Y_n(x)$ \cite{Olver:10} we have for
$0 < x \ll \sqrt{n}$ and $n\to\infty$
\begin{align}
 j_n (x)  &= \sqrt{\frac{\pi}{2x}}\, J_{n + \oh} (x) \sim \sqrt{\frac{\pi}{2x}}\, \frac{1}{\Gamma\left(n + \frac{3}{2} \right)} \left( \frac{x}{2}\right)^{n+\oh}\sim \oh \sqrt{\frac{\pi}{n}} \,\frac{1}{n!}\, \left( \frac{x}{2}\right)^{n}, \\[2mm]
 y_n (x) &= \sqrt{\frac{\pi}{2x}}\, Y_{n + \oh} (x) \sim -\sqrt{\frac{\pi}{2x}}\,
 \frac{\Gamma(n+\oh)}{\pi}\left(\frac{2}{x} \right)^{n+\oh} \sim
 -\frac{n!}{2\sqrt{\pi n}} \left(\frac{2}{x} \right)^{n+1}.
\end{align}
Then
\begin{align}
 \left| f_n \right| \leq  \,\frac{C^n_1a^{2n-1}}{(n!)^2 }, \quad
 \left| d_n \right| \geq \frac{C^n_2 }{a^2 n! }, \quad n\geq0,
\end{align}
where constants $C_i$ do not depend on $n$, and therefore $\dst \left| \frac{f_n}{d_n\, j_n (\kp R)} \right| \sim a^{2n+1}$ uniformly in $n \geq 1$ as $a\to 0$ and $\dst \left| \frac{f_0}{d_0\, j_0 (\kp R)} \right| \sim a^{3}$. The latter relations and \rf{514} justify the first statement of the lemma. 

In order to obtain the exact values of coefficients $c_1$, $c_2$ in \rf{l32} and complete the proof of Lemma \ref{l3} we need asymptotic expansions of $f_n, d_n$ for $n=0,1$ when $a \to 0 $. They look as follows
\begin{align}
d_0 &= \frac{j_0 (\kp R)}{\vre \kp a^2}\left[1+\frac{1}{6}\left(3k^2_{+} -2\,\frac{\vre}{\vri}\,k^2_{-} -k^2_{-} \right)a^2 + O\left( a^3 \right)\right], \\[2mm]
f_0 &= \frac{a}{3\kp R^2} \left[\frac{k^2_{+}}{\vre} - \frac{k^2_{-}}{\vri}
- \frac{a^2}{10} \left( \frac{\kp^4}{\vre} - \frac{\km^4}{\vri}\right)
+ \frac{a^2}{6}\,k^2_{+} k^2_{-} \left(\frac{1}{\vri} - \frac{1}{\vre} \right)\right] + O\left( a^5 \right), \\[2mm]
d_1 &= \frac{1}{3\vri \vre}\, \frac{\km}{(\kp a)^2}\, j_1 (\kp R) \left[\vre + 2\vri
-\frac{1}{10}\left( 2\vri + 3\vre \right) (\km a)^2 +\frac{1}{6}\, \vre (\kp a)^2
+ O\left( a^3 \right) \right], \\[2mm]
f_1 &= \frac{\km a}{9R^2} \left[\frac{1}{\vri} - \frac{1}{\vre} - \frac{3a^2}{10}\left(\frac{k^2_{-}}{\vri} - \frac{k^2_{+}}{\vre}\right)
+ \frac{a^2}{10}\left(\frac{k^2_{-}}{\vre} - \frac{k^2_{+}}{\vri}\right)\right] + O\left( a^5 \right).
\end{align}
Then
\begin{align}
 \Nm_a \psi -\Nm_0 \psi &= \frac{\vre a^3}{3 R^2} \left[\frac{k^2_{+}}{\vre} - \frac{k^2_{-}}{\vri} - \frac{a^2}{6}\left(\frac{k^2_{+}}{\vre} - \frac{k^2_{-}}{\vri}\right) \left(3k^2_{+} -2\,\frac{\vre}{\vri}\,k^2_{-} -k^2_{-} \right) \right. \nonu \\[2mm]
 & \left.- \frac{a^2}{10} \left( \frac{\kp^4}{\vre} - \frac{\km^4}{\vri}\right)
+ \frac{a^2}{6}\,k^2_{+} k^2_{-} \left(\frac{1}{\vri} - \frac{1}{\vre} \right)
 \right] \frac{\psi_0}{j_0^2 (\kp R)} \nonu \\[2mm]
 &+ \frac{k^2_{+} a^3}{3R^2}\, \frac{\vri \vre }{\vre + 2\vri}\left[\frac{1}{\vri} - \frac{1}{\vre} +\frac{1}{2\vri \vre}\, \frac{\vre - \vri}{\vre +2\vri}\left(\frac{1}{5}\,(2\vri + 3\vre)(\km a)^2 \right. \right. \nonu \\[2mm]
 &+\left.  \vre (\kp a)^2 \biggr)
 - \frac{3a^2}{10}\left(\frac{k^2_{-}}{\vri} - \frac{k^2_{+}}{\vre}\right)
+ \frac{a^2}{10}\left(\frac{k^2_{-}}{\vre} - \frac{k^2_{+}}{\vri}\right)\right] \frac{\psi_1}{j_1^2 (\kp R)}+ O\left( a^6 \right).
\label{diff}
\end{align}
Let us substitute $\ph$ for $\psi$ in \rf{diff}  and evaluate quadratic form \rf{l32}. 
We have
\begin{align}
 \psi_0 &= \frac{1}{4\pi R^2}\int_{r=R} \E^{-\I \bkp \cdot \br }\, \D S, \\[2mm]
 \psi_1 &=(\bet_{+} \cdot \br) \left( \int_{r=R} (\bet_{+} \cdot \br)^2\,\D S\right)^{-1}\int_{r=R} (\bet_{+} \cdot \br)\,\E^{-\I \bkp \cdot \br }\, \D S.
\end{align}
Note that
\[
 \int_{r=R} \E^{-\I \bkp \cdot \br }\, \D S = 4\pi R^2 j_0 (\kp R),
 \quad
 \int_{r=R} (\bet_{+} \cdot \br)^2\,\D S= \frac{4\pi R^4}{3}.
\]
Differentiation of the first equality above with respect to $|\bkp|$ leads to
\[
 \int_{r=R} (\bet_{+} \cdot \br)\,\E^{-\I \bkp \cdot \br }\, \D S=-4\pi \I R^3 j_1(\kp R).
\]
Therefore
\begin{align}
 \psi_0  = j_0 (\kp R), \quad 
 \psi_1 = - 3\I (\bet_{+} \cdot \br) R^{-1} j_1(\kp R),
\end{align}
and
\[
 \| \psi_0 \|^2 = 4\pi R^2 j_0^2 (\kp R), \quad
 \| \psi_1 \|^2 = 12\pi R^2 j_1^2 (\kp R),
\]
Hence
\begin{align*}
\left((\Nm_a -\Nm_0) \psi_0, \psi_0 \right)
  &= \frac{4\pi \vre a^3}{3}
 \left( \frac{k^2_{+}}{\vre} - \frac{k^2_{-}}{\vri}\right)
 -\frac{4\pi a^5}{45 \vri^2}\left(9k^4_{+}\vri^2 + k^2_{-}\vri\vre (k^2_{-}-15k^2_{+}) + 5k^4_{-}\vre^2 \right) + O\left(a^6\right), \\[2mm]
 \left((\Nm_a -\Nm) \psi_1, \psi_1 \right) &=\frac{4\pi a^3 k^2_{+} (\vre - \vri)}{\vre +2\vri}-\frac{12\pi a^5 k^2_{+}}{5(\vre+2\vri)^2}
 \left(k^2_{+}(\vre^2 -\vri^2) -\vri\vre (k^2_{+}-k^2_{-}) \right) + O\left(a^6\right).
\end{align*}
Now formula \rf{l32}  follows
from the last two relations and \rf{diff}. 
\end{proof}

\begin{lemma}\label{l5}
Let $d=2$. Then the operator function $ \Nm_a-\Nm_0:H^1(B_R)\to L^2(B_R)$ has the following asymptotic expansion as $a \to 0$
\begin{equation}
 \label{l51}
 \Nm_a-\Nm _0\sim \sum_{i=0}^\infty \sum_{j=0}^\infty N_{ij} a^{2i} \left(a^2 \ln a\right)^{j}, \quad N_{00} =0.
\end{equation}
Moreover,
\begin{equation}
\label{l52}
\left( (\Nm_a -\Nm_0) \ph, \ph \right)  =
\frac{\om^2}{c^2 }\, |\Pi| \left(
c_1 a^2 + c_2 a^4 \ln a + O\left(a^4 \right)\right), \quad a \to 0,
\end{equation}
where $c_1$ and $c_2$ are given in Theorem \ref{t2}.
\end{lemma}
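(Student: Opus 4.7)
The plan is to adapt the explicit Bessel--function construction from the proof of Lemma~\ref{l3} to two dimensions, replacing the spherical Bessel functions $j_n,y_n$ by the cylindrical functions $J_n,Y_n$, and tracking carefully the logarithmic terms produced by $Y_0$. On $\partial B_R$ the Laplace--Beltrami operator has eigenfunctions $\E^{\I n\theta}$, $n \in \mathbb{Z}$; decomposing $\psi = \sum_n \psi_n$ with $\psi_n \propto \E^{\I n\theta}$, problem~\rf{uin} separates and its solution is $u = a_n J_n(\km r)\E^{\I n\theta}$ in the disk $r < a$ and $u = (b_n J_n(\kp r) + c_n Y_n(\kp r))\E^{\I n\theta}$ in the annulus $a < r < R$, where $a_n,b_n,c_n$ are determined by the two transmission conditions at $r = a$ and the Dirichlet datum $u|_{r = R} = \psi_n$. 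This produces, in analogy with \rf{514}, an explicit rational formula for the $n$-th Fourier coefficient of $(\Nm_a-\Nm_0)\psi$ in terms of $J_n, Y_n$ evaluated at $\km a$, $\kp a$, $\kp R$.

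Next I would insert the small-argument series of the Bessel functions and expand each Fourier coefficient as $a \to 0$. The crucial difference from the three-dimensional case is that $Y_0(x)$ carries a logarithmic singularity $\tfrac{2}{\pi}\ln(x/2)$ near $x = 0$, whereas $Y_n$ for $n \geq 1$ is dominated by the purely algebraic term $\propto x^{-n}$ and its log correction enters only at relative order $x^{2n}\ln x$. Consequently only the $n = 0$ mode contributes $\ln a$-terms at the orders of interest; every other mode expands in pure powers of $a^2$ up to much higher relative orders. Uniform decay estimates in $n$ (mirroring the 3D bound $|f_n/(d_n j_n(\kp R))| = O(a^{2n+1})$) yield operator-norm convergence and give the double expansion~\rf{l51}; the identity $N_{00} = 0$ simply records that $\Nm_a \to \Nm_0$ as $a \to 0$.

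To obtain the quadratic form~\rf{l52} on $\ph = \E^{-\I\bkp\cdot\br}|_{r = R}$ I would use the Jacobi--Anger expansion
\[
\E^{-\I\bkp\cdot\br}\big|_{r=R} = \sum_{n\in\mathbb{Z}} (-\I)^n J_n(\kp R)\,\E^{\I n(\theta - \theta_0)},
\]
with $\theta_0$ the polar angle of $\hat{\bk}$, which reads off the Fourier coefficients of $\ph$ as explicit multiples of $J_n(\kp R)$. Substituting into the results of the previous step, modes with $|n| \geq 2$ contribute only $O(a^5)$ or smaller to the quadratic form, so the orders $a^2$ and $a^4\ln a$ in~\rf{l52} come exclusively from $n = 0$ and $n = \pm 1$: the monopole ($n=0$) supplies the compressibility-contrast part $(\gi/\ge - 1)f$ of $c_1$ together with the entire logarithmic term $c_2 a^4\ln a$ (via the $Y_0$ singularity), while the dipole modes $n = \pm 1$ add the density-contrast part $2\alpha f$ to $c_1$. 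Reassembling reproduces the values of $c_1,c_2$ stated in Theorem~\ref{t2}.

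The main obstacle will be extracting the coefficient of $\ln a$ cleanly from the $n=0$ mode: one must carry the expansion of $Y_0(\kp a)$ far enough to keep both the leading $\ln(\kp a/2)$ and the subsequent regular term, combine these with the first two Taylor coefficients of $J_0(\km a)$, $J_0(\kp a)$ and their derivatives, and then carefully reassemble the mixed $a^2$, $a^4$, and $a^4\ln a$ contributions so as to match the constants announced in Theorem~\ref{t2}. All remaining estimates parallel those in the proof of Lemma~\ref{l3}.
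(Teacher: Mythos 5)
Your proposal follows the same route as the paper's proof: separation of variables on $B_R$ with cylindrical Bessel functions $J_n,Y_n$, explicit rational expressions for the Fourier coefficients of $(\Nm_a-\Nm_0)\psi$, small-argument expansions to get the double series in $a^2$ and $a^2\ln a$, and evaluation of the quadratic form on $\ph$ using the Jacobi--Anger coefficients $(-\I)^nJ_n(\kp R)$. The overall architecture is sound and matches the paper.

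There is, however, one genuine error that would produce the wrong value of $c_2$. You assert that ``only the $n=0$ mode contributes $\ln a$-terms at the orders of interest'' and that the monopole supplies ``the entire logarithmic term $c_2a^4\ln a$.'' This is false, and it even contradicts your own (correct) observation that the log correction to $Y_n$, $n\geq 1$, enters at relative order $x^{2n}\ln x$: for $n=1$ that is relative order $a^{2}\ln a$ on top of a mode whose leading contribution to the quadratic form is already $O(a^2)$, hence an absolute contribution of order $a^{4}\ln a$ --- exactly the order you must track. Concretely, the denominator $D_1$ has the expansion $\frac{\km}{\pi\kp a}\bigl(\frac{1}{\vre}+\frac{1}{\vri}\bigr)J_1(\kp R)+\frac{\kp a}{2\pi}\bigl(\frac{1}{\vre}-\frac{1}{\vri}\bigr)J_1(\kp R)\ln(\kp a)+O(a)$, and this $\ln(\kp a)$ term feeds a contribution $\pi k_{+}^4\bigl(\frac{\vre-\vri}{\vre+\vri}\bigr)^2 a^4\ln(\kp a)$ into $\bigl((\Nm_a-\Nm_0)\psi_1,\psi_1\bigr)$. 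That is precisely the $\alpha^2$ summand in $c_2$ of Theorem \ref{t2}; dropping it would leave only the compressibility-contrast part $\oh\bigl(1-\gi/\ge\bigr)^2$ and give an incorrect dispersion relation. (A minor further slip: the $|n|=2$ modes contribute $O(a^4)$, not $O(a^5)$; this is harmless since it is absorbed into the $O(a^4)$ remainder of \rf{l52}, but it should be stated correctly.) With the dipole logarithm restored, your computation reassembles to the stated $c_1$ and $c_2$ and the proof goes through as in the paper.
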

\begin{proof}
 The proof is similar to that in Lemma \ref{l3}. Namely, solution $v$ of the problem  \rf{uin} is represented in the form
\begin{align}
 \label{Nma2}
 v = \left\{
 \begin{array}{ll}
 \dst \sum_{n=0}^\infty A_n\, J_{n}(\km r)\,\psi_n, \quad 0 \leq r < a, \\[2mm]
 \dst \sum_{n=0}^\infty \left[ B_n\, J_{n}(\kp r)
 + C_n \,Y_n (\kp r) \right]\psi_n, \quad a < r <R,
 \end{array}
 \right.
\end{align}
where $J_n(x),Y_n(x)$ are Bessel functions. Constants $A_n,B_n,C_n$ are determined from the boundary conditions in \rf{uin}:
\begin{align}
 A_n &= \frac{\kp}{\vre} \left[J_n (\kp a) Y^\prime_n (\kp a)
 - J_n^\prime (\kp a) Y_n (\kp a)\right]D_n^{-1} = \frac{2}{\pi \vre a D_n}, \\[2mm]
 B_n &= \left[\frac{\kp}{\vre}\, J_n (\km a) Y^\prime_n (\kp a)
 - \frac{\km}{\vri}\, J_n^\prime (\km a) Y_n (\kp a)\right] D_n^{-1},\\[2mm]
 C_n &= \left[\frac{\km}{\vri}\, J_n^\prime (\km a) J_n (\kp a)
 - \frac{\kp}{\vre}\, J_n (\km a) J_n^\prime (\kp a)\right] D_n^{-1},
\end{align}
where
\begin{align}
D_n &= Y_n (\kp R) \left[\frac{\km}{\vri}\, J_n^\prime (\km a) J_n (\kp a) 
-\frac{\kp}{\vre}\,J_n^\prime (\kp a) J_n (\km a)\right] \nonu \\[2mm]
&-J_n (\kp R) \left[\frac{\km}{\vri}\, J_n^\prime (\km a) Y_n (\kp a) 
-\frac{\kp}{\vre}\,J_n (\km a) Y_n^\prime (\kp a) \right].
\end{align}
Then
\begin{align}
\label{N2d}
 &\Nm_a \psi -\Nm _0\psi = \kp \sum_{n=0}^\infty \left[ b_n\, J_{n}^\prime(\kp R)
 + C_n \,Y_n^\prime (\kp R) -\frac{J_n^\prime (\kp R)}{J_n (\kp R)}\right]\psi_n = \sum_{n=0}^\infty \frac{F_n}{D_n\, J_n (\kp R)}\, \psi_n, 
\end{align}
where
\[
 F_n = \frac{2}{\pi R} \left[ \frac{\km}{\vri}\, J_n (\kp a) J_n^\prime (\km a)
 - \frac{\kp}{\vre}\, J_n (\km a) J_n^\prime (\kp a) \right].
\]
Using the asymptotic behavior of the Bessel function as $x \to 0$
\begin{align*}
 &J_0 (x) \sim 1, \quad &&Y_0 (x) \sim \frac{2}{\pi}\, \ln \frac{x}{2},  \\[2mm]
 &J_n (x) \sim \frac{1}{n!}\left( \frac{x}{2}, \right)^n, ~~n\geq 1, \quad &&Y_n (x) \sim - \frac{(n-1)!}{\pi}\left( \frac{2}{x} \right)^n, ~~n \geq 1, \\[2mm]
 &J^\prime_0 (x) = -J_1 (x) \sim -\frac{x}{2}, && 
\end{align*}
one can derive the form of asymptotic expansion of $D_n$ and $F_n$ as $a \to 0$
\begin{align}
\label{DnFn}
 D_n &= \sum_{j=0}^\infty \alpha_{j,n }\,a^{2j-1} + \ln (\kp a) \sum_{j=\max (n,1)} \beta_{j,n}\, a^{2j-1},\quad
 F_n = \sum_{j=n}^\infty \gamma_{j,n}\, a^{2j-1}.
\end{align}
In particular, for $n=0,1$ we have
\begin{align}
D_0 &= \frac{2}{\pi \vre a}\,J_0(\kp R)+\frac{a}{\pi}\left(\frac{k^2_{-}}{\vri}-\frac{k^2_{+}}{\vre} \right)J_0(\kp R) \ln(\kp a)+ O\left(a\right), \\[2mm]
F_0 &= \frac{a}{\pi R} \left[\frac{k^2_{+}}{\vre} - \frac{k^2_{-}}{\vri} + \frac{a^2}{8}\left(\frac{k^4_{-}}{\vri} - \frac{k^4_{+}}{\vre}  - 2k^2_{-}k^2_{+} \left(\frac{1}{\vre} - \frac{1}{\vri} \right)\right)\right] + O\left(a^5\right), \\[2mm]
D_1 &= \frac{\km}{\pi \kp a}  \left[\frac{1}{\vre} + \frac{1}{\vri}\right] J_1(\kp R) + \frac{\kp a}{2\pi}\left(\frac{1}{\vre} - \frac{1}{\vri} \right)J_1(\kp R)\ln(\kp a) + O\left(a\right), \\[2mm]
F_1 &= \frac{\kp \km a}{2\pi R} \left[\frac{1}{\vri} - \frac{1}{\vre} +\frac{a^2}{8\vri \vre} \left(k^2_{-}(\vri-3\vre) - k^2_{+}(\vre-3\vri) \right)\right] + O\left(a^5\right).
\end{align}
As a result,
\begin{align}
 \Nm_a \psi -\Nm _0\psi &= \left[\frac{\vre a^2}{2 R}\left( \frac{k^2_{+}}{\vre} - \frac{k^2_{-}}{\vri} \right)+ \frac{(k^2_{-} \vre - k^2_{+}\vri)^2}{4R \vri^2}\, a^4 \ln(\kp a)\right]
 \frac{\psi_0}{J_0^2 (\kp R)} \nonu \\[2mm]
 &+ \left[\frac{k^2_{+} a^2}{2 R}\,
 \frac{\vre - \vri}{\vre + \vri} + \frac{k^4_{+}}{4R} \left(\frac{\vre - \vri}{\vre + \vri} \right)^2  a^4 \ln(\kp a)\right]\frac{\psi_1}{J_1^2 (\kp R)} + O\left(a^4\right).
\end{align}
Substitution of \rf{DnFn} into \rf{N2d} provides first statement \rf{l51} of the Lemma.

We use the values of the integrals
\begin{align*}
 \int_{r=R} \E^{-\I \bk_{+} \cdot \br}\,\D s &= 2\pi R J_0(\kp R), \\[2mm]
 \int_{r=R} (\bet_{+} \cdot \br)\, \E^{-\I \bk_{+} \cdot \br}\,\D s &= 2\pi \I R J_1(\kp R).
\end{align*}
From here we calculate projections  $\psi_0$, $\psi_1$ of the function $\E^{-\I \bk_{+} \cdot \br}$:
\begin{align}
 \psi_0 &= \frac{1}{2\pi R}\int_{r=R} \E^{-\I \bk_{+} \cdot \br}\,\D s = J_0(\kp R), \\[2mm]
 \psi_1 &= (\bet_{+} \cdot \br) \left(\int_{r=R} (\bet_{+} \cdot \br)^2\,\D s \right)^{-1} \int_{r=R} (\bet_{+} \cdot \br)\, \E^{-\I \bk_{+} \cdot \br}\,\D s \nonu \\[2mm]
 &= \frac{\bet_{+} \cdot \br}{\pi R}\,2\pi \I R J_1(\kp R)
 =2\I (\bet_{+} \cdot \br)J_1(\kp R).
\end{align}
Hence
\begin{align}
 \| \psi_0 \|^2 = 2\pi R J_0^2 (\kp R), \quad
 \| \psi_1 \|^2 = 4\pi R J_1^2 (\kp R),
\end{align}
and therefore
\begin{align*}
 \left((\Nm_a -\Nm_0)\, \psi_0, \psi_0 \right) &=
 \vre \pi a^2\left[ \frac{k^2_{+}}{\vre} - \frac{k^2_{-}}{\vri} \right]+\frac{\pi \ln (\kp a)}{2\vri^2}\left(k^2_{-} \vre - k^2_{+} \vri \right)^2 a^4 + O\left(a^4
 \right), \\[2mm]
 \left((\Nm_a -\Nm_0)\, \psi_1, \psi_1 \right) &=
 2k^2_{+}\pi a^2\, \frac{\vre - \vri}{\vre + \vri}
 + \pi k^4_{+} \left( \frac{\vre - \vri}{\vre+\vri}\right)^2 \ln (\kp a)\, a^4 + O\left( a^4 \right). 
\end{align*}
Addition of the above formulas gives \rf{l52} in terms of notation \rf{km},\rf{kp}.
\end{proof}

\section{Proof of Theorems \ref{t1}, \ref{t2} and formulas \rf{d3}, \rf{d2}}   
\setcounter{equation}{0}

In order to prove Theorem \ref{t1}, we use Lemma \ref{l1} and reduce problem \rf{m-egv1} to the equivalent  equation \rf{nn}. Due to Lemma \ref{l2}, the operator in the latter equation is Fredholm and symmetric. Let us rewrite   \rf{nn} in the form 
\begin{equation}\label{matr}
 [(\Np_\bk-\Nm_0)+(\Nm_a-\Nm_0)]\psi=0.
\end{equation}
Obviously, the first term on the left is infinitely smooth in $\bk$ and has the matrix representation \rf{conc} in the decompositions $(\ph, L)$ of the domain and range of the operator. Recall that $\ph$ is defined by \rf{psi0} and $L$ is the subspace of the domain or range, respectively, which consists of functions orthogonal (in $L^2(\de B_R)$) to 
$\ph$. Due to Lemma \ref{l3}, the second operator in \rf{matr} is an infinitely smooth function of $a$. Hence equation \rf{matr} can be written in the form
\begin{equation}\label{matr1}
  \left[
 \begin{array}{cc}
  C\ei + \ei^2 D_{11}+B_{11}(a) & \ei D_{12}+B_{12}(a) \\[2mm]
  \ei D_{21}+B_{21}(a) & A + \ei D_{22}+B_{22}(a)
 \end{array}
 \right]
  \left(
 \begin{array}{cc}
  \phi_1 \\[2mm]
  \phi_2
 \end{array}
 \right)=0, \quad \ei=\kp -|\bk|,
\end{equation}
where $\phi_1=\sigma\ph$ is the projection of $\psi$ on $\ph$, component $\phi_2$ of $\psi$
is orthogonal to $\ph$,  
operator functions $  D_{ij}=D_{ij}(\ei,\kp,\hat{\bk}),~\kp>0,$ and $B_{1j}(a)$ are infinitely smooth,  and asymptotic expansions of operators $B_{ij}$ as $a\to 0$ are given in Lemma \ref{l3}. In particular, $\|B_{ij}\|=O(a^3),~ a\to 0$.

Since operator $A$ is invertible, the second equation of the system \rf{matr1} can be solved for $\phi_2$, and  the solution is proportional to the norm of $\phi_1$, i.e.,
\[
 \phi_2=\sigma\hat{\phi_2},
\]
where $\hat{\phi_2}$ is a $\sigma$-independent element of $L$ which is infinitely smooth in $\ei,\kp,\hat{\bk}$ and such that $\|\hat{\phi_2}\|=O(|\ei|+a^3),~|\ei|+a\to 0$. Then the first equation of \rf{matr1} implies
\[
 [C\ei+B_{11}(a)+O((|\ei|+a^3)^2)]\sigma=0.
\]
Hence, a non-trivial solution of \rf{matr1} for small $\ei,a$ exists if and only if 
\begin{equation}\label{drel}
 C\ei+B_{11}(a)+O((|\ei|+a^3)^2)=0,
\end{equation}
where the left-hand side is an infinitely smooth function of $\ei,\kp,\hat{\bk}$ (when $\kp>0)$ and operator $B_{11}(a)$ coincides with the left-hand side in \rf{l32}. The dispersion relation is defined by solving the equation above for $\ei$. We obtain that $|\bk|-\kp$ is an infinitely smooth function of $a, \kp,\hat{\bk}$ and
\[
 |\bk|-\kp=C^{-1}B_{11}+O(a^6).
\]
This justifies the infinite smoothness of the dispersion relation \rf{dis1}.
In order to justify expansion \rf{dis2}, we use \rf{l32} for $B_{11}(a)$ and formula for $C$ from Lemma \ref{l33} and arrive at
\begin{equation}\label{end}
 (|\bk|-\kp)2\kp=\frac{\om^2}{c^2 } \left(
c_1 a^3 + c_2 a^5 + O\left(a^6 \right)\right), \quad a \to 0.
\end{equation}
From here it follows that $|\bk|-\kp=O(a^3)$ and therefore $2\kp=|\bk|+\kp+O(a^3)$. The latter relation together with \rf{end} implies \rf{dis2} and completes the proof of Theorem \ref{t1}. We also note that the first correction term in \rf{dis2} agrees with that provided in \cite{Belyaev:87}.

Proof of Theorem \ref{t2} is absolutely similar. One need only to use Lemma \ref{l5} instead of Lemma \ref{l3}. Hence all the functions of $a$ will have asymptotic expansions as in  \rf{l51} instead of being infinitely smooth. Operators $B_{ij}$ will have order $O(a^2), ~a\to 0$, instead of $O(a^3)$, $B_{11}$ will coincide with the left-hand side in \rf{l52}, and \rf{drel} will have the form
\begin{equation*}
 C\ei+B_{11}(a)+O((|\ei|+a^2)^2)=0.
\end{equation*}

In the case of the Dirichlet boundary condition, norms of operators $B_{ij}$ have order $O(a)$ and $O(1/\ln a)$ when $d=3,2$, respectively, and this leads to different dispersion relations.

\section{Effective wave velocity}

Asymptotic formulas \rf{dis2}, \rf{disp2d} allow us to calculate both the effective phase velocity $\dst c_{\f} = \frac{\om}{k}$ and the group velocity $c_\ast = \dst \frac{\de \om}{\de k}$. With the accuracy of the asymptotic formulas we have

\begin{itemize}
 \item
 $d=2$, transmission boundary conditions:
 \begin{align}
  \label{2dtg}
   c_\ast &= c \left( 1 -\oh \,c_1 a^2 + \frac{3}{2}\, c_2a^4\ln \frac{\om a}{c}\right), \\[2mm]
   c_\f &= c \left( 1 -\oh \,c_1 a^2 + \oh\, c_2a^4\ln \frac{\om a}{c}\right).
   \label{2dtph}
 \end{align}
 \item
  $d=2$, Neumann boundary conditions:
 \begin{align}
 \label{2dng}
 c_\ast &= c \left(1- \frac{1}{2}\, f+ \frac{9}{4}
  \left(\frac{\om a}{c} \right)^2 f \ln \frac{\om a}{c}  \right), \\[2mm]
  c_\f &= c \left(1- \frac{1}{2}\, f+ \frac{3}{4}
  \left(\frac{\om a}{c} \right)^2 f \ln \frac{\om a}{c}  \right). 
  \label{2dnph}
 \end{align}
 \item 
 $d=3$, transmission boundary conditions:
 \begin{align}
  \label{3dtg}
  c_\ast &= c \left(1 - \oh\, c_1 a^3 - \frac{3}{2}\, c_2 a^5 \right), \\[2mm]
  c_\f &= c \left(1 - \oh\, c_1 a^3 - \oh\, c_2 a^5 \right),
  \label{3dtph}
  \end{align}
 \item
  $d=3$, Neumann boundary conditions:
\begin{align}
\label{3dng}
 c_\ast &= c \left(1- \frac{1}{4}\, f- \frac{9}{40}
  \left(\frac{\om a}{c} \right)^2 f  \right), \\[2mm]
 c_{\f} &= c \left(1- \frac{1}{4}\, f- \frac{3}{40}
  \left(\frac{\om a}{c} \right)^2 f  \right),
  \label{3dnph}
 \end{align}
\end{itemize}
Here $c_1$, $c_2$ are defined by \rf{c1},\rf{c2} in three dimensions and are given by \rf{t2a} for $d=2$. The formulas are consistent with the results reported in \cite{Krokhin:2003}, \cite{Torrent:2006}, \cite{Torrent:2007}.
If $\gi/\ge \gg 1$ as in the case of air bubbles in water where $\gi/\ge \sim 1.5 \cdot 10^4$, then even a tiny concentration of the air bubbles causes a dramatic reduction of the wave speed. This effect was confirmed numerically in \cite{Kafesaki:00} and \cite{Torrent:2006}, however it is not attributed to the Minnaert resonance or wave localization. We rewrite first correction terms \rf{c1} and \rf{t2a} in formulas
\rf{dis2} and \rf{disp2d}, respectively, in terms of the wave speed $c_\pm$ in the constituent media
\begin{align}
 c_1 a^2 &= \frac{(\vri c_{-} - \vre c_{+})^2+ \vri\vre (c_{+}-c_{-})(c_{+}+3c_{-})}{\vri c^2_{-}(\vre + \vri)} f, \\[2mm]
 c_1 a^3 &= \frac{(\vri c_{-} - \vre c_{+})^2+ 2\vri\vre (c_{+}-c_{-})(c_{+}+2c_{-})}{\vri c^2_{-}(\vre + 2\vri)} f.
\end{align}
The first terms in the numerators of the formulas indicate the mismatch in the characteristic impedances $\vr c$ of the media. 
These formulas show that if the scatterers are slow ($c_{-} < c_{+}$) then the effective wave speed will {\it always} be slower than the wave speed in the host medium. On the other hand, the presence of fast scatterers $(c_{-} > c_{+})$ does not guarantee that the effective wave velocity will be larger than that in the matrix. It will be so only when the characteristic impedances of the two media are close enough so that their mismatch does not cause strong scattering. 

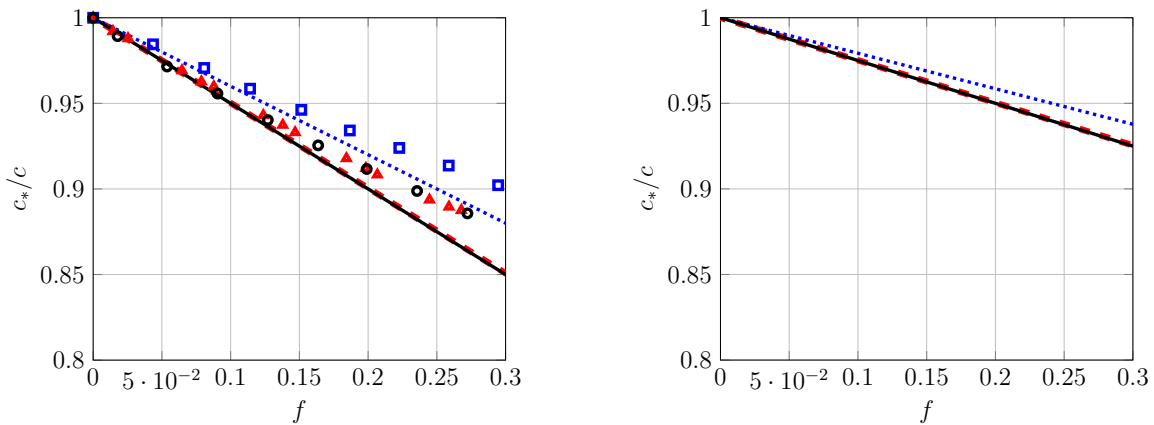
\begin{figure}[ht]
\begin{center}
\begin{minipage}{.4\textwidth}
\begin{tikzpicture}[scale=0.8,remember picture]
    \begin{axis}[
    		xlabel=$f$,
		    ylabel=$c_\ast/c$,
		    grid=major,
        xmin=0, xmax=0.3,
        ymin=0.8, ymax=1.0,
    ]
\addplot+[red, ultra thick, only marks, mark=triangle] plot coordinates{
        (0.00000, 1.00000)
	    (0.01442, 0.99210)
	    (0.02535, 0.98792)
	    (0.06458, 0.96919)
	    (0.07873, 0.96268)
	    (0.08774, 0.95984)
	    (0.12375, 0.94300)
	    (0.13790, 0.93735)
	    (0.14691, 0.93304)
	    (0.18421, 0.91785)
	    (0.19836, 0.91211)
	    (0.20672, 0.90835)
	    (0.24466, 0.89367)
	    (0.25881, 0.88956)
	    (0.26781, 0.88753)
	    (0.30383, 0.87306)
	    (0.31798, 0.86895)
	    (0.32698, 0.86602)
	    (0.36428, 0.85253)
	    (0.37843, 0.84784)
	    (0.38679, 0.84591)
	    (0.42345, 0.83346)
	    (0.43760, 0.82896)
	    (0.44660, 0.82687)
	    (0.48262, 0.81516)
	    (0.49677, 0.81066)
	    (0.50577, 0.80924)
	    (0.54307, 0.79626)
	    (0.55722, 0.79311)
	    (0.56622, 0.79054)
	    (0.60353, 0.77612)
	    (0.61767, 0.77124)
	    (0.62604, 0.76955)
	    (0.66269, 0.75464)
	    (0.67684, 0.74976)
	    (0.68585, 0.74610)
	    (0.72315, 0.72786)
	    (0.73730, 0.72086)
	    (0.74630, 0.71716)
	    (0.78360, 0.69068)
	    (0.79775, 0.67964)
	    (0.80611, 0.67324)
	    (0.84277, 0.62282)
	    (0.85692, 0.59889)
	    (0.88450, 0.51960)
	    (0.88997, 0.49665)
	    (0.90258, 0.42371)
	    (0.90451, 0.40799)
	    }; 
\addplot [red,line width= 3pt, dashed, dash pattern=on 5pt off 5pt] {1-0.5*0.995*x}; 
\addplot [color=blue, ultra thick,only marks,mark=square] plot coordinates{
(0.00000, 1.00000)
(0.04352, 0.98449)
(0.08077, 0.97067)
(0.11414, 0.95857)
(0.15138, 0.94620)
(0.18670, 0.93416)
(0.22266, 0.92396)
(0.25863, 0.91366)
(0.29459, 0.90219)
(0.33121, 0.89316)
(0.36589, 0.88397)
(0.40251, 0.87499)
(0.43785, 0.86757)
(0.47383, 0.86014)
(0.51044, 0.85160)
(0.54514, 0.84480)
(0.58178, 0.84043)
(0.61711, 0.83203)
(0.65310, 0.82607)
(0.68973, 0.81967)
(0.72443, 0.81447)
(0.76106, 0.80786)
(0.79640, 0.80126)
(0.83237, 0.79241)
(0.86834, 0.78263)
(0.90429, 0.76837)
};
\addplot [blue,ultra thick,dotted] {1-0.5*0.8*x}; 
\addplot+[black,ultra thick,only marks,mark=o,draw=black,fill=black] plot coordinates{
(0.00000, 1.00000)
(0.01779, 0.98925)
(0.05377, 0.97152)
(0.09040, 0.95574)
(0.12703, 0.94023)
(0.16366, 0.92545)
(0.19898, 0.91166)
(0.23561, 0.89882)
(0.27224, 0.88571)
(0.30822, 0.87358)
(0.34419, 0.86137)
(0.38017, 0.85000)
(0.41615, 0.83814)
(0.45212, 0.82803)
(0.48810, 0.81736)
(0.52604, 0.80503)
(0.55744, 0.79528)
(0.58491, 0.78569)
(0.61238, 0.77809)
(0.63986, 0.76976)
(0.66733, 0.75967)
(0.69480, 0.74856)
(0.72227, 0.73568)
(0.74975, 0.72105)
(0.77722, 0.70164)
(0.80469, 0.67642)
(0.83086, 0.64261)
(0.85179, 0.60309)
(0.86618, 0.56426)
(0.87665, 0.52529)
(0.88506, 0.48181)
(0.89038, 0.44113)
(0.89627, 0.39644)
(0.90150, 0.34675)
(0.90265, 0.30814)
(0.90608, 0.26646)
(0.90870, 0.23368)
}; 
\addplot [black,ultra thick] {1-0.5*x}; 
\end{axis}
\end{tikzpicture}
\end{minipage}
\begin{minipage}{.4\textwidth}
\hspace*{10mm}
\begin{tikzpicture}[scale=0.8]
    \begin{axis}[
    		xlabel=$f$,
		    ylabel=$c_\ast/c$,
		    grid=major,
        xmin=0, xmax=0.3,
        ymin=0.8, ymax=1.0,
        legend style={at={(0.5,-0.3)},
      anchor=north,legend columns=-1},
    ]
\addplot+[red,line width= 3pt, dashed, dash pattern=on 5pt off 5pt] {1-0.5*0.497*x}; 

\addplot [blue, ultra thick,dotted, smooth] {1 - 0.5*0.415*x}; 

\addplot [black,ultra thick] {1 - 0.25*x}; 

\end{axis}
\end{tikzpicture}
\end{minipage}
\caption{Long wavelength sound velocity for a hexagonal lattice of cylindrical (left) and an arbitrary three-dimensional lattice of spherical (right) inclusions as a function of the filling fraction, $f$.  Lines correspond to the asymptotic formulas \rf{2dtg}, \rf{2dng}, \rf{3dtg}, \rf{3dng} for water in air (red dashed), mercury in water (blue dotted) and rigid inclusions in air (solid black). Symbols depict results of numerical calculations for a cluster made of $151$ cylinders \cite{Torrent:2006} of water in air (red triangles), mercury in water (blue squares), and rigid cylinders (black circles). }
\label{graphs}
\end{center}
\end{figure}
Figure \ref{graphs} depicts the comparison of the asymptotic formulas of the present paper with numerical calculations of the sound velocity for a hexagonal cluster of $151$ fluid cylinders embedded in a fluid or gas (left panel) reported in \cite{Torrent:2006} when $\omega \to 0$. The right panel shows dependence of the long wavelength sound group velocity for a lattice of spherical inclusions of the same constituents calculated by formulas \rf{3dtg}, \rf{3dng}. Compared with the two-dimensional case, the sound velocity decays about twice as slowly. We note that the only geometric parameter the sound velocity depends on in long wavelength approximation is the filling fraction, $f$.

Increasing the wave frequency $\omega$ enhances the scattering and makes the decay of the sound velocity even faster as shown in Figure \ref{graphs1} for $\omega/c = 1$ and $|\Pi |=1$. This effect is also illustrated in Figure \ref{graphs2} when the group velocity of sound in an orthorhombic  lattice of rigid spherical inclusions decreases as $\omega/c$ increases from $0$ to $3$.

\begin{figure}[ht]
\begin{center}
\begin{minipage}{.4\textwidth}
\hspace*{10mm}
\begin{tikzpicture}[scale=0.8]
\def\oo{1}; \def\pp{3.1415};
    \begin{axis}[
    		xlabel=$f$,
		    ylabel=$c_\ast/c$,
		    grid=major,
        xmin=0, xmax=0.5,
        ymin=0.6, ymax=1.0,
    ]
\addplot+[draw=red,line width= 3pt,dashed, dash pattern=on 5pt off 5pt, domain=0.001:0.5,samples=20,no marks] 
{1-0.5*0.995*x +1.5*1.495*x^2*(\oo^2/\pp)*ln(\oo*sqrt(x/\pp))}; 

\addplot+[draw=blue,dotted, line width= 3pt, domain=0.001:0.5,samples=20,no marks] 
{1-0.5*0.800*x +1.5*1.17*x^2*(\oo^2/\pp)*ln(\oo*sqrt(x/\pp))}; 

\addplot+[draw=black,line width= 1pt, domain=0.001:0.5,samples=20,no marks] 
{1-0.5*x +2.25*x^2*(\oo^2/\pp)*ln(\oo*sqrt(x/\pp))}; 

\end{axis}
\end{tikzpicture}
\end{minipage}
\begin{minipage}{.4\textwidth}
\hspace*{10mm}
\begin{tikzpicture}[scale=0.8]
\def\oo{1}; \def\pp{3.1415}; \def\dom{0.5};
    \begin{axis}[
    		xlabel=$f$,
		    ylabel=$c_\ast/c$,
		    grid=major,
        xmin=0, xmax=\dom,
        ymin=0.6, ymax=1.0,
    ]
\addplot+[draw=red,line width= 3pt,dashed, dash pattern=on 5pt off 5pt, domain=0.0:\dom,samples=20,no marks] 
{1-0.5*0.497*x -1.5*0.15*x*\oo^2*(0.75*x/\pp)^(2/3)}; 

\addplot+[draw=blue,dotted, line width= 3pt, domain=0.0:\dom,samples=20,no marks] 
{1-0.5*0.415*x -1.5*0.116*x*\oo^2*(0.75*x/\pp)^(2/3)}; 

\addplot+[draw=black,line width= 1pt, domain=0.0:\dom,samples=20,no marks] 
{1-0.25*x -(9/40)*x*\oo^2*(0.75*x/\pp)^(2/3)}; 

\end{axis}
\end{tikzpicture}
\end{minipage}
\caption{Same as Figure \ref{graphs}, but at the dimensionless wave frequency $\omega/c = 1$ and unit volume of the cell of periodicity $|\Pi|=1$. 
}
\label{graphs1}
\end{center}
\end{figure}
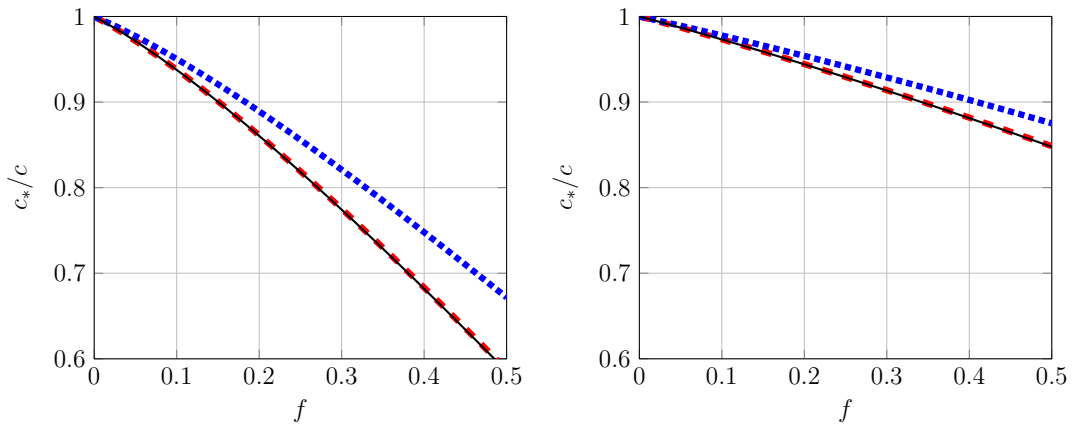

\begin{figure}[H]
\begin{center}
\begin{tikzpicture}[scale=0.8,remember picture]
\def\oo{1}; \def\pp{3.1415}; \def\dom{3}; \def\ff{0.3}
    \begin{axis}[
    		xlabel=$\omega/c$,
		    ylabel=$c_\ast/c$,
		    grid=major,
        xmin=0, xmax=\dom,
        ymin=0.7, ymax=1.0,
    ]
%

\addplot+[draw=red,line width= 3pt, domain=0.0:\dom,samples=20,no marks] 
{1-0.25*\ff -(9/40)*\ff*x^2*(0.75*3*\ff/\pp)^(2/3)}; 

\end{axis}
\end{tikzpicture}
\caption{Effective group velocity for an infinite orthorhombic lattice of rigid spherical inclusions with the fundamental cell side ratios $1:1.5:2$ $(|\Pi| = 3)$ as a function of dimensionless frequency $\omega/c$ when the filling fraction $f=0.3$.}
\label{graphs2}
\end{center}
\end{figure}
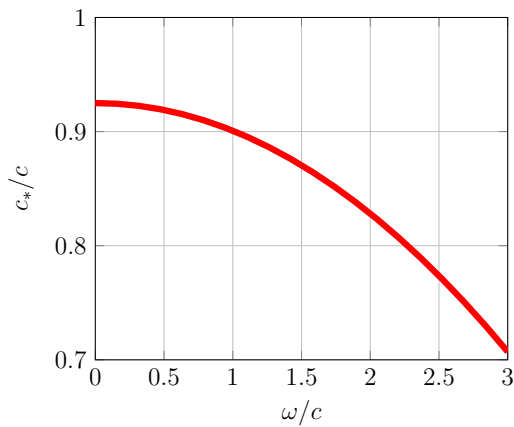

\section{Conclusions}
\setcounter{equation}{0}

We have derived dispersion relations for the acoustic waves propagating in homogeneous medium containing a periodic lattice of spherical or cylindrical inclusions. The wavelength is assumed to be of the order of the periods of the lattice while the radius $a$ of the inclusion is small compared to the periods.
We suggest a new approach to derive and justify asymptotic expansions of the dispersion relations in two and three-dimensional cases as $a \to 0$ and evaluate explicitly several first terms. 
The approach is based on the reduction of the original singularly perturbed (by inclusions) problem to an equivalent regular one. In order to get the latter problem we split the cell of periodicity into two parts by introducing an auxiliary spherical boundary of a fixed radius. We replace the original problem with the equality of Dirichlet-to-Neumann maps on the auxiliary boundary of the new two subdomains. 
The Neumann, Dirichlet and transmission boundary conditions on inclusion boundaries are considered. The effective wave speed is obtained as a function of the wave frequency, the filling fraction of the inclusions, and the physical properties of the constituents of the mixture.  The method can also be extended to small inclusions of arbitrary shape. 

\subsection*{Acknowledgments}

The work of B.V. was partially supported by the NSF grant DMS-1714402 and the Simons Foundation grant 527180

\end{document}